\def\td{tree-decom\-po\-si\-tion}
\newcommand{\tw}{\rm tw}
\newcommand{\ctw}{\rm ctw}
\newcommand{\cbn}{\rm cbn}
\newcommand{\wctw}{\rm wctw}
\newcommand{\lbound}{(\ell(G)-2)}
\newcommand{\B}{\mathcal B}
\newcommand{\U}{\mathcal U}
\newcommand{\V}{\mathcal V}
\newcommand{\W}{\mathcal W}
\newcommand{\C}{\mathcal C}
\newcommand{\sub}{\subseteq}
\newcommand{\comment}[1]{}
\newtheorem{theorem}{Theorem}
\newtheorem{lemma}[theorem]{Lemma}
\newtheorem{corollary}[theorem]{Corollary}
\newtheorem{proposition}[theorem]{Proposition}
\theoremstyle{definition}
\newtheorem{question}{Problem}
\def\?#1{\vadjust{\vbox to 0pt{\vss\vskip-8pt\leftline{%
     \llap{\hbox{\vbox{\pretolerance=-1
     \doublehyphendemerits=0\finalhyphendemerits=0
     \hsize16truemm\tolerance=10000\small
     \lineskip=0pt\lineskiplimit=0pt
     \rightskip=0pt plus16truemm\baselineskip8pt\noindent
     \hskip0pt        
     #1\endgraf}\hskip7truemm}}}\vss}}}
\title{Bounding connected tree-width}
\author{Matthias Hamann \and Daniel Wei{\ss}auer\smallskip \and Department of Mathematics\\ University of Hamburg}
\date{}
\begin{document}
  
 \maketitle
 
  \begin{abstract}
Diestel and M\"uller showed that the \emph{connected tree-width} of a\linebreak graph~$G$, i.\,e., the minimum width of any \td\ with connected parts, can be bounded in terms of the tree-width of~$G$ and the largest length of a geodesic cycle in~$G$.
We improve their bound to one that is of correct order of magnitude. Finally, we construct a graph whose connected tree-width exceeds the connected order of any of its brambles.
 This disproves a conjecture by Diestel and M\"uller asserting an analogue of tree-width duality.
 \end{abstract}

  \begin{section}{Introduction} \label{intro}
 Intuitively, a \td\ $(T, (V_t)_{t \in T})$ of a graph~$G$ can be regarded as giving a bird's-eye view on the global structure of the graph, represented by~$T$, while each part represents local information about the graph.  But this interpretation can be misleading: the tree-decomposition may have disconnected parts, containing vertices which lie at great distance in~$G$, and so this intuitively appealing distinction between local and global structure can not be maintained.
 
 This can be remedied if we require every part to be connected. We call such a \td\ \emph{connected}. Jegou and Terrioux \cite{JegouTerrioux2014preprint,JegouTerrioux2014} pointed out that the efficiency of algorithmic methods based on \td s for solving constraint satisfaction problems can be improved when using connected \td s. 
 
 The \emph{connected tree-width} $\ctw (G)$ is defined accordingly as the minimum width of a connected \td\ of the graph~$G$. Trivially, the connected tree-width of a graph is at least as large as its tree-width and, as Jegou and Terrioux \cite{JegouTerrioux2014} observed, long cycles are examples of graphs of small tree-width but large connected tree-width. Diestel and M\"uller \cite{MalteRhd} showed that, more generally, the existence of long \emph{geodesic} cycles, that is, cycles in a graph~$G$ that contain a shortest path in~$G$ between any two of their vertices, raises the connected tree-width. Furthmore, they proved that these two obstructions to small connected tree-width, namely, large tree-width and long geodesic cycles, are essentially the only obstructions:
 \pagebreak
  
  \begin{theorem}[{{\cite[Theorem~1.1]{MalteRhd}}}]
  There is a function $f: \mathbb{N}^2 \to \mathbb{N}$ such that the connected tree-width of any graph of tree-width~$k$ and without geodesic cycles of length greater than~$\ell$ is at most $f(k, \ell)$.
  \end{theorem}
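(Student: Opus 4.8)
We may assume that $G$ is connected, since the connected tree-width of a disjoint union equals the maximum of those of its components (link the component decompositions by tree edges with empty adhesion sets). Two facts will drive the argument. First, ``geodesic'' and ``isometric'' mean the same for a cycle $C$ of $G$: for all $x,y\in C$, the length of the shorter of the two arcs of $C$ between them equals $d_G(x,y)$. Secondly, the cycle space of $G$ is spanned by its geodesic cycles: if a cycle $C$ is not geodesic, pick $a,b\in C$ with $d_G(a,b)$ strictly below the length of each arc of $C$ between $a$ and $b$, let $R$ be a shortest $a$--$b$-path, and write $C$ as the symmetric difference of the two elements of the cycle space obtained by combining $R$ with each arc; both have fewer edges than $C$ and decompose into shorter cycles, so induction on the number of edges applies. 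Hence, if $G$ has no geodesic cycle of length exceeding $\ell$, its cycle space is spanned by cycles of length at most $\ell$.

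The plan is to take a \td\ $(T,(V_t)_{t\in T})$ of $G$ of width $k$ and reshape it, from the leaves upwards, into one with connected parts whose width is bounded in terms of $k$ and $\ell$. First refine $(T,(V_t))$ so that every torso is connected, splitting a node whose torso falls into components into a path of nodes, one per component --- this never raises the width. Root $T$; for a node $t$ with parent $t'$, put $S_t := V_t \cap V_{t'}$ and let $\beta_t$ be the union of the parts below $t$, so that $S_t$ separates $\beta_t$ from the rest of $G$. Inductively one builds, for each $t$, a \td\ of $G[\beta_t]$ with connected parts, of width at most some $f(k,\ell)$, in which $S_t$ is contained in a single part. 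At a node $t$ with children $t_1,\dots,t_q$ one replaces $V_t$ by a connected set $V_t^{\star}\supseteq V_t$, obtained by adjoining to $V_t$ an (essentially minimum) connected subgraph joining the components of $G[V_t]$ inside the region available at $t$; connectedness of the torsos guarantees that such a connector exists, and its near-minimality makes it a tree with at most $k+1$ leaves, hence a union of at most $2k$ internally disjoint geodesic segments, whence $|V_t^{\star}| \le 1 + 2kL$, where $L$ bounds the length of such a segment. One then glues the decompositions of the $G[\beta_{t_i}]$ to a fresh node carrying $V_t^{\star}$, along the sets $S_{t_i}\subseteq V_t^{\star}$. Two routine-looking but delicate points are: that the result is again a \td, which one secures by keeping the connectors local to the region at $t$ so that no vertex's family of parts is broken; and that the connectors may be chosen near-minimal without spoiling that locality.

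The heart of the matter, and the step I expect to be the main obstacle, is the bound $L \le L(k,\ell)$, so that $f(k,\ell) = O(k\ell)$. One must turn a long shortest path $P$ appearing as a connector segment into a long geodesic cycle. Since $P$ is a segment of a minimal connector of a bounded part of a \td\ with connected torsos, its endpoints are joined by a further path avoiding the interior of $P$, so $P$ lies on a cycle $C$ of $G$ with $|C| \ge 2|P|$; and $C$ is a sum of geodesic cycles of length at most $\ell$, by the second fact above. It remains to extract a single geodesic cycle of length more than $\ell$ from this data, using that $P$ itself is shortest and hence occurs ``solidly'' in the sum --- here a careful exchange argument on the cycles through $P$, exploiting $2$-connectivity of the relevant torso, is what I expect to be needed, and this is the technical crux. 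Granting $|V_t^{\star}| \le f(k,\ell) = O(k\ell)$, the induction gives $\ctw(G) \le f(k,\ell)$; since the new parts also have $G$-diameter $O(k\ell)$, the same construction bounds the tree-length of $G$ as well. Finally, coarsening each of the three delicate steps --- any connected superset of a part as connector, tree-closures to repair the \td\ axioms, and a weaker bound on $|P|$ --- still yields some $f(k,\ell)$, which is the theorem as stated.
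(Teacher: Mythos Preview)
Your reduction is sound: the observation that geodesic cycles generate the cycle space means the hypothesis forces $\ell(G)\le\ell$, so it suffices to bound $\ctw(G)$ in terms of $\tw(G)$ and $\ell(G)$, which is exactly what the paper does in Theorem~\ref{main result}. But the sketch has two genuine gaps, and the paper's argument differs from yours at precisely those two points.

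\textbf{The length bound.} Your ``technical crux'' is not merely technical; as stated it does not go through. Knowing that the shortest connector segment $P$ lies on some cycle $C$, and that $C$ is a $\mathbb{Z}_2$-sum of cycles of length at most $\ell$, gives no bound on $|P|$ whatsoever: think of a long path in a large grid, where every cycle is a sum of $4$-cycles. An ``exchange argument on the cycles through $P$'' cannot produce a long geodesic cycle here, because there are none. What is needed, and what your sketch does not use, is the separation structure of the \td. The paper's Lemma~\ref{geo cycle} does this: for an edge $t_1t_2$ of $T$ with $V_{t_1}\cap V_{t_2}$ meeting two components of $V_{t_1}$, a parity count on the edges across the cut $(V_{T_1},V_{T_2})$ shows that some generating cycle $C$ has a segment $S\subseteq C\cap V_{T_2}$ whose ends lie in distinct components of $V_{t_1}$. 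Since the $t$-admissible path $P$ is a \emph{shortest} path in $V_{T_t}$ with that property, $|P|\le |S|\le \ell-1$. The point is not to place $P$ on a short cycle, but to find some short witness and invoke minimality of $P$.

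\textbf{The cascading.} Your bottom-up scheme builds a connected decomposition of $G[\beta_{t_i}]$ and then, at the parent $t$, adds a connector that may pass through $\beta_{t_i}\setminus S_{t_i}$. Repairing (T3) then forces those connector vertices into parts of the already-finished decomposition below, and this happens again at every ancestor; the ``tree-closure'' fix you mention does not come with any bound, and a part at large depth could absorb contributions from every ancestor. The paper avoids this by processing \emph{top-down} and tracking, for each node $u$, a bookkeeping forest $Q_u$ on $W_u$. Lemma~\ref{Q_t acyclic} (acyclicity of $Q_u$) and Lemma~\ref{bookkeeping} together show that the number of iterations that ever touch $W_u$ is at most $|V_u|-1$, independently of the depth; combined with the per-iteration bound from Lemma~\ref{geo cycle} this yields $|W_u|\le (\ell-2)(|V_u|-1)+1$, i.e.\ width at most $\tw(G)(\ell(G)-2)$.

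In short, your outline identifies the right two subproblems but solves neither; the paper's two corresponding devices --- the parity/cut argument of Lemma~\ref{geo cycle} and the acyclic bookkeeping graph of Section~\ref{algo} --- are exactly the missing ideas.
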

  
  They also showed that $f(k, \ell ) = {\cal O}(k^3 \ell)$. In fact, their proof does not only work with geodesic cycles, but with any collection of cycles that generate the cycle space of the graph~$G$. Given a graph~$G$, we define~$\ell (G)$ to be the smallest natural number~$\ell$ such that the cycles of length at most~$\ell$ generate the cycle space of~$G$.
Our main result improves the bound of Diestel and M\"uller significantly:
  
  \begin{theorem} \label{main result}
   Let $G$ be a graph containing a cycle. Then the connected tree-width of~$G$ is at most $\tw(G)\lbound$.
  \end{theorem}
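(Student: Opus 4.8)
I would first reduce to the case that $G$ is connected and $\ell(G)\ge 3$: the connected tree-width of a graph equals the maximum of those of its components, a component that is a forest has connected tree-width at most $1\le\tw(G)\lbound$, and $\ell$ of a component is at most $\ell(G)$. Then I would fix a tree-decomposition $(T,(V_t)_{t\in T})$ of $G$ of width $k:=\tw(G)$ and normalise it (passing to a lean one in the sense of Thomas), so that every part has at most $k+1$ vertices and every adhesion set $V_s\cap V_t$ is the vertex set of a minimal separator of~$G$. The goal is then to enlarge each part to a connected superset, at the price of a single factor $\ell(G)-2$, by threading short paths of $G$ through the decomposition; private vertices of a part (those in no other part) have all their neighbours inside their part, so once the incident adhesion sets are connected up, they attach for free.

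\textbf{Key step.} The heart of the argument is a lemma that quantifies the price of connecting a separator, and this is where the hypothesis on $\ell(G)$ enters: \emph{if $S$ is a minimal separator of $G$ and $G[S]$ has $c$ components, then any two vertices lying in different components of $G[S]$ can be joined in $G$ by a path with at most $\ell(G)-3$ inner vertices, so $S$ can be turned into a connected subgraph by adding at most $(c-1)(\ell(G)-3)$ vertices of~$G$.} To prove the length bound I would fix such vertices $u,v$ and take a shortest $u$--$v$ path $P$ in $G$ whose interior avoids $S$; since $S$ is a minimal separator, $G-S$ has at least two full components, so $u$ and $v$ are also joined by a path $Q$ meeting $S$ only in $u,v$ and running through a component of $G-S$ other than the one containing the interior of $P$. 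Then $P\cup Q$ contains a cycle $O$ through $u$ and $v$ having $P$ as one of its two $u$--$v$ arcs; writing $O$ as a symmetric difference of cycles of length at most $\ell(G)$ and restricting attention to the side of $S$ containing the interior of $P$, one of these short cycles must yield a $u$--$v$ walk on that side shorter than $P$ unless $|P|\le\ell(G)-2$, contradicting the choice of $P$. The technical heart is making this exchange precise --- choosing $O$ and its short-cycle decomposition so that a genuine shortcut of $P$ materialises --- and disposing of the degenerate configurations; in the borderline case $\ell(G)=3$ the lemma just asserts that every minimal separator of $G$ induces a connected subgraph, which already covers graphs like the octahedron.

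\textbf{Assembling the decomposition.} Finally I would build the connected tree-decomposition by enlarging each part $V_t$ to $W_t\supseteq V_t$, where $W_t$ is $V_t$ together with the inner vertices of the connecting paths supplied by the Key Step for the (at most $c-1\le k$) components of $G[V_t]$. Each such path contributes at most $\ell(G)-3$ new vertices, so $|W_t|\le (k+1)+k(\ell(G)-3)=k\lbound+1$, and hence the width of the new decomposition is at most $\tw(G)\lbound$. Since only vertices and edges of $G$ are used, vertex and edge coverage are automatic, and the parts are connected by construction; the real work --- and the point at which this bound improves on the ${\cal O}(k^3\ell)$ of Diestel and M\"uller --- is to thread the chosen paths through $T$, subdividing its edges and placing each path-vertex into a subtree of the new tree, so that the subtree condition survives all of these simultaneous enlargements, while arranging that each part absorbs the interiors of at most $k$ of these short paths rather than of a larger number. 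Getting this bookkeeping right, in particular choosing the connecting paths coherently across the whole decomposition, is the main obstacle; everything else is routine.
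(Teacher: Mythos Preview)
Your plan identifies the right two ingredients --- a ``short path between components of a separator'' lemma, and a bookkeeping mechanism that threads those paths through the tree without blowing up any single part --- but it supplies only the first and explicitly defers the second. That deferral is the gap: the sentence ``arranging that each part absorbs the interiors of at most $k$ of these short paths rather than of a larger number \ldots\ is the main obstacle'' is precisely the content of the paper's proof, and nothing in your proposal explains why the number of paths landing in a fixed part should be bounded by~$|V_t|-1$ rather than by, say, the depth of~$t$ in~$T$. When you add a path~$P$ to connect two components of~$V_t$, the interior of~$P$ will typically lie in many other parts~$V_u$; to preserve (T3) you must add pieces of~$P$ to each such~$W_u$, and a priori a single~$W_u$ could receive contributions from arbitrarily many ancestors. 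Your arithmetic $|W_t|\le (k+1)+k(\ell-3)$ silently assumes the only additions to~$W_t$ are the paths chosen for~$V_t$ itself, which is false once threading is taken into account.

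The paper resolves this with a specific mechanism you do not have: it uses a \emph{stable} (not lean) decomposition, roots~$T$, and processes nodes top-down, at each node~$t$ choosing \emph{$t$-admissible} paths that are forced to stay inside~$V_{T_t}$. Each part~$W_u$ then carries an auxiliary ``bookkeeping'' forest~$Q_u$ on~$W_u$ recording what has been added; the paper proves that~$Q_u$ stays acyclic and that every addition to~$W_u$ strictly decreases the number of components of~$Q_u$. Since~$Q_u$ starts with~$|V_u|$ components, at most~$|V_u|-1$ additions ever occur, each contributing at most~$\ell(G)-3$ new vertices --- this is where the bound comes from. Your Key Step, incidentally, is essentially the paper's Lemma~\ref{geo cycle}; note that it is stated and used for an adhesion in a stable decomposition, not for a whole part~$V_t$, and that the rigorous argument is a parity count on edges across the separator rather than the informal ``one short cycle must give a shortcut'' that you sketch.
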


(Observe that every forest satisfies $\ctw (G) = \tw(G) \leq 1$.)
Theorem~\ref{main result} will be proved in Sections~\ref{defs}--\ref{bound}. In Section~\ref{example} we discuss an example that demonstrates that this bound is best possible up to a constant factor.

Note that $\ell(G)$ can differ arbitrarily from the length of a longest geodesic cycle: consider e.\,g.\ an $(n \times n)$-grid where every edge except for those on the boundary is subdivided once. Then the boundary is a geodesic cycle of length~$4(n-1)$, while the cycle space is generated by the collection of `squares', each of length at most~$8$.
It is no coincidence that the graph in this example has large tree-width, as the following unexpected consequence of our inquiry shows:

\begin{corollary}\label{cycle space tw}
Every graph~$G$ containing a geodesic cycle of length~$k$ has tree-width at least $k/\ell(G)$. 
\end{corollary}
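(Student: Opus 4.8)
The plan is to prove the following quantitative contrapositive: in every tree-decomposition $(T,(V_t)_{t\in T})$ of $G$ and for every geodesic cycle $C$ of $G$ one has $|V(C)|\le\ell(G)\cdot\max_{t\in T}|V_t|$, which rearranges to $\tw(G)\ge k/\ell(G)$ when $C$ has length $k$. So fix a tree-decomposition of width $w:=\tw(G)$ and a geodesic cycle $C$ with $|V(C)|=k$; we may assume $k>w+1$ (otherwise $w\ge k-1\ge k/2\ge k/\ell(G)$), so that $C$ lies in no single part. By the standard averaging argument along $T$ there is a node $t^{\ast}$ whose part $X:=V_{t^{\ast}}$ is a balanced separator for $V(C)$: $|X|\le w+1$, and every component of $G-X$ meets $V(C)$ in at most $k/2$ vertices. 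Deleting $X$ cuts $C$ into arcs $A_1,\dots,A_r$ with $r\le|X\cap V(C)|\le w+1$, each contained in a single component of $G-X$ and therefore with at most $k/2$ internal vertices. It then suffices to bound $|X|$ from below by roughly $k/\ell(G)$: a long geodesic cycle admits no small balanced separator.

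This is the point at which the hypothesis on $\ell:=\ell(G)$ is used. Fix generating cycles $D_1,\dots,D_m$ of the cycle space of $G$ with $|D_i|\le\ell$ for all $i$, so that $E(C)\subseteq\bigcup_iE(D_i)$. Geodesicity constrains each short cycle sharply: if $Z$ is a cycle of $G$ and $u,v\in V(Z)\cap V(C)$, then the two $u$-$v$ subpaths of $Z$ each have length at least $d_G(u,v)=d_C(u,v)$, so $d_C(u,v)\le|Z|/2$; hence $V(D_i)\cap V(C)$ is confined to a sub-arc of $C$ of length at most $\ell/2$, and $D_i$ carries at most $\ell/2$ edges of $C$. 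One now analyses, one component of $G-X$ at a time, how the $D_i$ cover the arcs $A_j$: each $D_i$ either lives inside a single component together with $X$, or meets $X$ in at least two vertices, and in the latter case those two vertices of $X$ are at $G$-distance at most $\ell/2$. A $D_i$ of the first type that spans across an arc closes up, using $X$ only, into a cycle through two vertices of $C$ whose $G$-distance equals the length of that arc, which forces the arc to be short compared with $|X|$; a $D_i$ of the second type can only link pairs of vertices of $X$ that are already close, so several of them are needed to ferry the cycle across a long stretch. Balancing these effects forces the $k$ edges of $C$ to be spread over $\gtrsim k/\ell$ arcs, whence $w+1\ge|X|\ge r\gtrsim k/\ell$; carrying out the count at an edge of $T$ rather than at a node, and tracking the small constants, then yields $\tw(G)\ge k/\ell(G)$ with no loss.

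The genuine obstacle is the middle of that paragraph: proving honestly that a bounded separator cannot cut a long geodesic cycle cheaply once the cycle space is generated by short cycles. Morally $C$ bounds a disc tiled by the cells $D_i$, any separator of $C$ must traverse this tiling, and between consecutive crossings of a cell boundary it can shadow only $O(\ell)$ edges of $C$; but with no planarity to lean on, turning this picture into a combinatorial argument requires the rerouting and linking machinery that already underlies Theorem~\ref{main result} (which likewise trades the parameter $\ell$ against width). An appealing alternative is to exhibit, from $C$ and the generating cycles $D_i$, a bramble or a subdivided-grid-like minor of order about $k/\ell$; but the difficulty is the same, namely organising the a priori uncontrolled combinatorics of how the short cells fill in the interior of $C$.
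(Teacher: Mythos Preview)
Your proposal is not a proof but a programme, and you say so yourself: the central step---showing that a part $V_{t^\ast}$ which balances $V(C)$ must already have size at least roughly $k/\ell$---is left open, with the admission that ``proving honestly that a bounded separator cannot cut a long geodesic cycle cheaply'' is ``the genuine obstacle''. The sketch surrounding it does not close the gap. For instance, the sentence ``a $D_i$ of the first type that spans across an arc closes up, using $X$ only, into a cycle through two vertices of $C$ whose $G$-distance equals the length of that arc'' is not a well-defined step: $D_i$ is already a cycle, nothing forces it to meet $X$, and no mechanism is given that converts such a $D_i$ into the claimed short arc bound. The planar-tiling intuition you invoke has no direct combinatorial substitute in this setting, and the proposal does not supply one.

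The paper's argument is short and goes an entirely different route. A geodesic cycle $C$ of length~$k$ yields a bramble: the family of all sub-arcs of~$C$ of length~$\lceil k/2\rceil$ is a bramble whose connected order, by geodesicity, is at least $\lceil k/2\rceil+1$ (this is \cite[Lemma~7.1]{MalteRhd}). On the other hand, Theorem~\ref{bramble bound} bounds the connected order of \emph{every} bramble of~$G$ from above by $\tw(G)\lfloor\ell(G)/2\rfloor+1$. Comparing the two gives $\lceil k/2\rceil\le\tw(G)\lfloor\ell(G)/2\rfloor$, hence $\tw(G)\ge k/\ell(G)$. So the ``rerouting and linking machinery'' you anticipated is indeed what is needed, but it is packaged once and for all inside the proof of Theorem~\ref{bramble bound} (via Proposition~\ref{metatheorem} and Lemma~\ref{geo cycle}); the corollary then drops out in one line, with no separate separator analysis required.
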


The \emph{tree-width duality theorem} of Seymour and Thomas~\cite{ST1993GraphSearching} asserts that a graph has tree-width less than~$k$ if and only if it has no bramble of order at least~$k$. Diestel and M\"uller~\cite{MalteRhd} conjectured that a similar duality holds for connected tree-width and the maximum \emph{connected order} of a bramble: the minimum size of a connected vertex set meeting every element of the bramble. 
We disprove their conjecture by giving an infinite family of counterexamples in Section~\ref{duality}.

 Since every \td\ has, for every bramble of the graph, a part covering it, Theorem~\ref{main result} immediately yields an upper bound on the connected order of any bramble of the graph. In Section~\ref{brambles}, we apply the techniques and results from previous sections to strengthen this bound:

\begin{theorem} \label{bramble bound}
	Let $G$ be a graph containing a cycle. Then the connected order of any bramble of $G$ is at most $\tw(G)\lfloor \frac{\ell(G)}{2} \rfloor + 1$.
\end{theorem}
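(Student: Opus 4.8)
\noindent\emph{Proof plan.}
The plan is to reduce the statement to finding a single small connected set covering the bramble, and then to produce such a set from the construction behind Theorem~\ref{main result}, with one change: wherever that construction is forced to route a connecting path the long way around a short cycle, we are now free to go the short way. Throughout, let $k=\tw(G)$ and $\ell=\ell(G)$; since $G$ contains a cycle, $\ell\ge 3$ and $k\ge 2$. Let $\mathcal B$ be a bramble of~$G$. As the elements of $\mathcal B$ pairwise touch, they all lie in one component of~$G$, so we may assume $G$ is connected.

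First I would fix a tree-decomposition $(T,\mathcal V)$ of~$G$ of width~$k$ and invoke the standard fact that some part $V_{t_0}$ meets every element of $\mathcal B$. It then suffices to exhibit a connected subgraph $H$ of~$G$ with $V_{t_0}\sub V(H)$ and $|V(H)|\le k\lfloor \ell/2\rfloor+1$: then $V(H)$ is a connected set meeting every element of $\mathcal B$, so the connected order of $\mathcal B$ is at most this.

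To build~$H$, I would revisit the construction of Sections~\ref{defs}--\ref{bound} at the level of a single part. There, each part $V_t$ of a width-$k$ tree-decomposition is made connected by adding at most~$k$ connecting paths, each of which is an arc, between two of its (non-adjacent) vertices, of a generating cycle of length at most~$\ell$; globally these arcs must be chosen compatibly, and it is this compatibility that can force the \emph{longer} of the two arcs, giving the length bound $\ell-2$ of Theorem~\ref{main result}. Since here we only need to connect the one part $V_{t_0}$ and do not need the result to be a tree-decomposition, the compatibility constraints vanish, and for each pair we may instead take the \emph{shorter} arc, of length at most $\lfloor |C|/2\rfloor\le\lfloor\ell/2\rfloor$; a shorter arc running through another vertex of $V_{t_0}$ is harmless here, as it only helps span $V_{t_0}$. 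Let $H$ be the union of $V_{t_0}$ with these at most~$k$ shorter arcs. Then $H$ is connected, $V_{t_0}\sub V(H)$, and each arc contributes at most $\lfloor\ell/2\rfloor-1$ internal vertices, so $|V(H)|\le (k+1)+k(\lfloor\ell/2\rfloor-1)=k\lfloor\ell/2\rfloor+1$; and if $|V_{t_0}|\le k$ the same count gives a smaller bound.

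The main obstacle is isolating the required local statement inside the proof of Theorem~\ref{main result}: one must check that connecting a single part really needs at most $k=|V_{t_0}|-1$ arcs whose endpoints all lie in $V_{t_0}$ --- the count above is tight and leaves no room for auxiliary branch vertices outside $V_{t_0}$ --- and that the only reason the original construction ever prefers the longer arc is cross-part compatibility, so that for a single part the shorter arc is always admissible. Concretely, I expect this to come down to the claim, presumably already implicit in Section~\ref{bound}, that the vertices of any part span a connected subgraph of the auxiliary graph on $V_{t_0}$ whose edges are (short) arcs of generating cycles. Everything else --- the ``some part meets every bramble element'' fact and the arithmetic --- is routine.
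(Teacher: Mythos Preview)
Your plan is essentially the paper's approach: cover the bramble by one part and then connect that single part cheaply, exploiting the extra freedom that comes from only caring about one node of the tree. The paper packages this as the stronger statement $\wctw(G)\le \tw(G)\lfloor\ell(G)/2\rfloor$ (Theorem~\ref{wctw bound}) and then invokes Lemma~\ref{easy bramble bound}; but the proof of Theorem~\ref{wctw bound} is exactly ``for each part $V_t$, root at $t$ and run the construction''.

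Two points where your description drifts from what actually happens. First, the reason the length bound drops from $\ell-2$ to $\lfloor\ell/2\rfloor$ is not a ``compatibility'' issue: with $t$ the root one has $T_t=T$, hence $W_{T_t}=V(G)$, so \emph{both} arcs of the short cycle $C$ produced by Lemma~\ref{geo cycle} lie in $W_{T_t}$ and are candidates; the $t$-admissible path, being shortest, therefore has length at most $\lfloor\ell/2\rfloor$. (For a non-root $t$ only one arc need lie in $W_{T_t}$, which is why Theorem~\ref{main result} gets $\ell-2$.) Second, you need to start from a \emph{stable} \td\ (Proposition~\ref{stable optimal exists}); Lemma~\ref{geo cycle} uses stability.

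On your flagged ``main obstacle'': the paper does \emph{not} arrange that all connecting paths have both endpoints in $V_{t_0}$. The paths are $t_0$-admissible in the successively updated decomposition, so their endpoints lie in the growing $W_{t_0}$. The count ``at most $|V_{t_0}|-1$ paths'' then comes straight from the bookkeeping Lemmas~\ref{Q_t acyclic}--\ref{bookkeeping} via Proposition~\ref{metatheorem}, with $m_{t_0}=\lfloor\ell/2\rfloor$; no auxiliary graph on $V_{t_0}$ is needed.
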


 \end{section}

 \begin{section}{Definitions and notation} \label{defs}
  For a tree~$T$ with root~$r$, we call~$s$ a \emph{descendant} of~$t$ and~$t$ an \emph{ancestor} of~$s$ if~$t$ lies on the unique path from~$r$ to~$s$. 
  If additionally $st \in E(T)$, we call~$s$ a \emph{child} of~$t$ and~$t$ the \emph{parent} of~$s$. 
  We write $T_t$ for the subtree of descendants of~$t$.
  Recall that a \emph{\td} of $G$ is a pair $(T,\V)$ of a tree $T$ and a family $\V=(V_t)_{t\in T}$ of vertex sets $V_t\sub V(G)$, one for every node of~$T$, such that:
\begin{enumerate}[(T1)]
\item $V(G) = \bigcup_{t\in T}V_t$,
\item for every edge $e$ of~$G$ there exists a $t \in T$ with $e\sub V_t$,
\item $V_{t_1} \cap V_{t_3} \sub V_{t_2}$ whenever $t_2$ lies on the $t_1$--$t_3$ path in~$T$.
\end{enumerate}

The sets $V_t$ in such a \td\ are its \emph{parts}.
 For $A \sub V(T)$ we write $V_A := \bigcup_{t \in A} V_t$. 
The \emph{width} of $(T,\V)$ is $\max_{t\in T}(|V_t|-1)$ and the \emph{tree-width} $\tw(G)$ of~$G$ is the minimum width of any of its \td s.

  In our proof of Theorem~\ref{main result} we will make use of an explicit procedure that transforms a given \td\ into a connected \td\ by iteratively adding paths to a disconnected part of the decomposition. For this to work efficiently, we will restrict ourselves to paths of a particular kind.
  
   Let $(T, \V)$ be a \emph{rooted} \td\ of~$G$, i.\,e.\ $T$ is rooted, and $t \in T$. A path $P$ in $G$ is \emph{$t$-admissible} if it lies entirely in $V_{T_t}$, joins different components of $V_t$ and is shortest possible with these properties.
  Note that $t$-admissible paths have precisely two vertices in~$V_t$:
  
  \begin{lemma} \label{path shape}
   Let $(T, \V)$ be a rooted \td\ of a graph $G$, $t \in T$ and $P$ a $t$-admissible path. Then there is a unique child $s$ of $t$ such that all internal vertices of $P$ lie in $V_{T_s} \setminus V_t$.\qed
  \end{lemma}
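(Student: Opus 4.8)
The plan is to first determine how a $t$-admissible path $P$ interacts with $V_t$, and then to exploit the fact that the subtrees $T_s$ at the children $s$ of $t$ are pairwise disjoint in order to confine the interior of $P$ to a single one of them.

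First I would observe that $P$ has length at least two: its endpoints lie in different components of $V_t$ by the definition of admissibility, so $P$ cannot be a single edge, since the two ends of an edge of $G$ lying in $V_t$ belong to the same component of $V_t$. Thus $P$ has at least one internal vertex. Next, no internal vertex $v$ of $P$ lies in $V_t$: otherwise $v$ would lie in some component of $V_t$, hence in a different component from at least one endpoint $x$ of $P$, and then the subpath of $P$ from $v$ to $x$ would still lie in $V_{T_t}$ and join two components of $V_t$, contradicting the minimality built into the definition of a $t$-admissible path. Hence every internal vertex of $P$ lies in $V_{T_t}\setminus V_t$.

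Let $s_1,\dots,s_m$ be the children of $t$. Using (T3), every $T$-path leaving $T_{s_i}$ passes through $t$, and this yields two facts. First, for $i\ne j$ we get $V_{T_{s_i}}\cap V_{T_{s_j}}\sub V_t$, so the sets $V_{T_{s_i}}\setminus V_t$ are pairwise disjoint with union $V_{T_t}\setminus V_t$. Second, for a vertex $v\in V_{T_{s_i}}\setminus V_t$, the set of nodes $u$ with $v\in V_u$ is a subtree of $T$ that meets $T_{s_i}$ and avoids $t$, hence is contained in $T_{s_i}$. Now write $P=v_0v_1\cdots v_k$; by the above each internal vertex $v_l$ lies in $V_{T_{s_{\sigma(l)}}}\setminus V_t$ for a unique index $\sigma(l)$. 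For consecutive internal vertices, (T2) provides a node whose part contains both $v_l$ and $v_{l+1}$, and by the second fact this node lies in $T_{s_{\sigma(l)}}\cap T_{s_{\sigma(l+1)}}$, which is nonempty only if $\sigma(l)=\sigma(l+1)$. So $\sigma$ is constant, say with value $i$, and $s:=s_i$ works; it is the only child that does, since an internal vertex lying also in $V_{T_{s_j}}\setminus V_t$ for some $j\ne i$ would lie in $V_{T_{s_i}}\cap V_{T_{s_j}}\sub V_t$, a contradiction. I do not anticipate a genuine obstacle here; the only point requiring a little care is verifying that a $t$-admissible path really does have an internal vertex, which is what makes the uniqueness statement meaningful.
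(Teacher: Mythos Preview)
Your argument is correct and carefully fills in all the details: the minimality of $P$ forces its interior to avoid $V_t$, and then (T3) together with (T2) pins the interior to a single $V_{T_s}\setminus V_t$. The paper itself gives no proof of this lemma (it is stated with an immediate \qed), so there is nothing further to compare; your write-up is exactly the kind of routine verification the authors deemed safe to omit.
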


  In general, $t$-admissible paths need not exist. However, as we shall see, we can easily confine ourselves to \td s that always have $t$-admissible paths.
  
   We call a \td\ $(T, \V)$ \emph{stable} if for every edge $t_1t_2 \in E(T)$ of $T$, both $V_{T_1}$ and $V_{T_2}$ are connected in $G$, where $T_i$, for $i = 1,2$, is the component of $T - t_1t_2$ containing $t_i$. (Later, we will use this naming convention without further mention.)

\begin{lemma}\label{stable give admissible}
Let $(T,\V)$ be a rooted stable \td\ of a connected graph~$G$.
Then every $t\in T$ with disconnected $V_t$ has a $t$-admissible path.\qed
\end{lemma}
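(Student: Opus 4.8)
The plan is to verify that the set of paths of $G$ satisfying the first two conditions in the definition of $t$-admissibility — lying entirely inside $V_{T_t}$ and joining two different components of $G[V_t]$ — is nonempty. Since the length of any such path is a natural number, the set of lengths it attains has a least element, and any path realising that length is $t$-admissible; so it suffices to exhibit one path with the first two properties.

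The key step is to show that $G[V_{T_t}]$ is connected. If $t$ is the root of $T$ this is immediate, since then $V_{T_t}=V(G)$ and $G$ is connected by assumption. Otherwise let $p$ be the parent of $t$: the component of $T-tp$ containing $t$ is exactly the descendant subtree $T_t$, so applying the definition of stability to the tree-edge $tp$ yields precisely that $V_{T_t}$ is connected in $G$.

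To finish, use that $V_t$ is disconnected to pick vertices $u$ and $v$ of $V_t$ lying in different components of $G[V_t]$. As $u,v\in V_t\sub V_{T_t}$ and $G[V_{T_t}]$ is connected, there is a $u$--$v$ path in $G$ all of whose vertices lie in $V_{T_t}$; this path lies in $V_{T_t}$ and joins different components of $V_t$, so the collection of such paths is nonempty and a shortest one among them is $t$-admissible. The only point requiring care is the identification of the $t$-side component of $T-tp$ with $T_t$, together with the separate (trivial) treatment of the root, since this is what lets stability be invoked in exactly the form needed; beyond that there is no real obstacle, and Lemma~\ref{path shape} then records the shape of the path produced.
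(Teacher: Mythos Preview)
Your argument is correct and is exactly the straightforward verification the paper has in mind; indeed the paper does not spell out a proof at all (the lemma is stated with an immediate \qedsymbol). The only superfluous part is the closing remark about Lemma~\ref{path shape}, which is not needed to establish the existence of a $t$-admissible path.
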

   
   Stable \td s were also studied in~\cite{FraigniaudNisse}, where they are called \emph{connected \td s}.
In that article, an explicit algorithm is presented that turns a \td\ of a connected graph into a stable \td\ without increasing its width.
For our purposes it suffices to know that every connected graph has a stable \td\ of minimum width.
This can also be deduced from~\cite[Corollary~3.5]{MalteRhd}.

\begin{proposition}\label{stable optimal exists}
Every connected graph $G$ has a stable \td\ of width $\tw(G)$.\qed
\end{proposition}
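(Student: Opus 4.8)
The plan is to start from a minimum-width \td\ of $G$ — which exists and has width $\tw(G)$ by definition — and to turn it into a stable one without enlarging the width, by iterating a single local surgery. Suppose $(T,\V)$ is a \td\ of $G$ and $e=t_1t_2\in E(T)$ is an edge for which $G[V_{T_1}]$ is disconnected. Put $S:=V_{t_1}\cap V_{t_2}$, so that $S\sub V_{T_1}$, $V_{T_1}\cup V_{T_2}=V(G)$, $V_{T_1}\cap V_{T_2}=S$, and $G$ has no edge between $V_{T_1}\setminus S$ and $V_{T_2}\setminus S$. Since $G$ is connected, every component of $G[V_{T_1}]$ meets $S$: a component $C$ disjoint from $S$ would lie in $V_{T_1}\setminus S$ and have all of its $G$-neighbours there as well, so $C$ would be a component of $G$ and hence all of $V(G)$ — impossible, as $G[V_{T_1}]$ is disconnected while $G$ is not. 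Write $B_1,\dots,B_q$ (with $q\ge2$) for the components of $G[V_{T_1}]$. The surgery replaces the subtree $T_1$ by $q$ pairwise disjoint copies $T^1_1,\dots,T^q_1$, equips the copy of each node $t'\in T_1$ inside $T^i_1$ with the bag $V_{t'}\cap B_i$, and joins the copy of $t_1$ in $T^i_1$ to $t_2$, for every~$i$.

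Checking that the result $(T',\V')$ is again a \td\ of $G$ is routine; the points worth spelling out are that (T1) holds because $B_1,\dots,B_q$ partition $V_{T_1}$, that (T2) holds because both endpoints of an edge inside $V_{T_1}$ lie in one component $B_i$ while an edge with both ends in $S$ is still covered by $V_{t_2}$, and that (T3) holds because a vertex of $S$ lies in a unique $B_i$, so it has its $T_1$-bags recreated inside the single copy $T^i_1$, keeps its $T_2$-bags, and these are joined through the new edge from the copy of $t_1$ to $t_2$, while every other vertex simply inherits its old (possibly relabelled) set of bags. Since $V_{t'}\cap B_i\sub V_{t'}$, the width does not grow; and since $\sum_i|V_{t'}\cap B_i|=|V_{t'}\cap V_{T_1}|=|V_{t'}|$ for $t'\in T_1$, the quantity $\sum_t|V_t|$ is unchanged. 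After the surgery the side $B_i$ of each new edge joining the copy of $t_1$ in $T^i_1$ to $t_2$ is connected, so on those edges we have genuinely made progress.

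The main obstacle is termination, for a single surgery may introduce new disconnected sides — both inside the copies $T^i_1$ and on the $t_2$-side of the new edges, whose vertex set $V(G)\setminus(B_i\setminus S)$ need not be connected. To force termination I would restrict attention to the \emph{finite} family of \td s of $G$ of width at most $\tw(G)$ that minimize $\sum_t|V_t|$ (finite once one observes that we may assume every bag non-empty, so that $|V(T)|\le\sum_t|V_t|$), and take as potential $\Phi(T,\V):=\sum_{f\in E(T)}\bigl(c(f)-2\bigr)$, where $c(f)$ is, for an edge $f$ of $T$ with sides $F_1,F_2$, the number of components of $G[V_{F_1}]$ plus the number of components of $G[V_{F_2}]$; then $\Phi\ge0$, with $\Phi=0$ precisely for stable \td s, and the surgery keeps us inside the family. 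The heart of the proof is then to show that performing the surgery on a member of the family that minimizes $\Phi$, at an edge with a disconnected side, strictly decreases $\Phi$ — contradicting minimality and hence forcing $\Phi=0$, i.e.\ stability. I expect this last step — tracking how the component counts of \emph{all} sides change when one subtree is split into $q$ pieces — to be the only genuine difficulty, and should the naive potential $\Phi$ not suffice, the clean alternative is to quote directly the algorithm of Fraigniaud and Nisse~\cite{FraigniaudNisse}, which turns any \td\ of a connected graph into a stable one of no larger width, or to invoke~\cite[Corollary~3.5]{MalteRhd}.
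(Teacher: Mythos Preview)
The paper does not prove this proposition: it simply cites the algorithm of Fraigniaud and Nisse~\cite{FraigniaudNisse} and~\cite[Corollary~3.5]{MalteRhd}, exactly the fallback you name at the end of your proposal. So the part of your proposal that is complete coincides with the paper's treatment.

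Your attempted self-contained argument, however, is genuinely unfinished. You correctly set up the surgery and verify that it produces a \td\ of no larger width with $\sum_t|V_t|$ unchanged, but the termination step is left open, and your potential $\Phi$ does \emph{not} obviously do the job. Already in the simplest case where $T$ is a single edge $t_1t_2$, $G[V_{t_1}]$ has $q=2$ components and $G[V_{t_2}]$ has $r=2$ components, one computes $\Phi=2$ before the surgery and $\Phi'=2$ after (each new edge has a connected $B_i$-side and a $2$-component opposite side), so there is no strict decrease. One would then need a secondary invariant, or a different potential, to force progress --- precisely the difficulty you flag but do not resolve. A smaller wrinkle: the surgery may create empty bags (whenever some $V_{t'}\cap B_i=\emptyset$), so it does not keep you inside the ``all bags non-empty'' subfamily you use to argue finiteness; you would have to prune empty bags and track $\Phi$ through that as well. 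None of this is fatal, but as written the direct argument is a sketch rather than a proof, and the proposition ultimately rests on the citations --- as it does in the paper.
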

  
   If we add a $t$-admissible path $P$ to a part $V_t$ in order to join two of its components, we might not obtain a \td. The following lemma shows how it can be patched.   
  \begin{lemma} \label{add path}
   Let $(T, \V)$ be a rooted \td\ of a graph $G$, $t \in T$ and $P$ a $t$-admissible path. For $u \in T$ let  
   \begin{equation} \label{updating}
    W_u := \begin{cases}
            V_u \cup (V(P) \cap V_{T_u} ), &\text{ if } u \in T_t, \\
            V_u, &\text{ if } u \notin T_t. \\
           \end{cases}\tag{$*$}
   \end{equation}
  Then $(T, \W)$ is a \td\ of $G$. For all $u \in T$, every component of $W_u$ contains a vertex of $V_u$. If $(T, \V)$ is stable, so is $(T, \W)$.
  \end{lemma}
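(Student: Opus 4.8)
The plan is to check, in order, that $(T,\W)$ satisfies (T1)--(T3), that every component of $W_u$ meets $V_u$, and that stability is inherited, concentrating the effort on (T3) and the component property. Throughout I write $T^z := \{v\in T : z\in V_v\}$ for the (connected) set of parts of $(T,\V)$ containing a vertex~$z$, and I use Lemma~\ref{path shape}: $P$ meets $V_t$ exactly in its two endpoints, and there is a child~$s$ of~$t$ such that every internal vertex of~$P$ lies in $V_{T_s}\setminus V_t$. Conditions (T1) and (T2) are immediate: $W_u\supseteq V_u$ for every~$u$ and $W_u\sub V_u\cup V(P)\sub V(G)$, so the parts witnessing (T1) and (T2) for $(T,\V)$ still work.

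For (T3) it suffices to show that $\{v : z\in W_v\}$ is connected in~$T$ for every $z\in V(G)$. Since $W_v\setminus V_v\sub V(P)$ for all~$v$, this set equals~$T^z$, hence is connected, whenever $z\notin V(P)$. If $z$ is an endpoint of~$P$, then $z\in V_t$, so $t\in T^z$, and $\{v : z\in W_v\}$ is still equal to~$T^z$: a node~$v$ with $z\in W_v\setminus V_v$ would lie in~$T_t$ with $z\in V_{T_v}$, so the connected set~$T^z$ would meet~$T_v$ while also containing the ancestor~$t\notin T_v$ of~$v$, forcing $v\in T^z$, i.e.\ $z\in V_v$ --- a contradiction. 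If $z$ is an internal vertex of~$P$, then $T^z\sub T_s\sub T_t$ by Lemma~\ref{path shape}, and a short computation shows that $\{v : z\in W_v\}$ equals~$T^z$ together with the path in~$T$ from~$t$ to the topmost node of~$T^z$ (the node of~$T^z$ nearest the root, which is unique as~$T^z$ is connected); this is connected. Hence (T3) holds.

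For the component property, the case $u\notin T_t$ is trivial as $W_u=V_u$. Let $u\in T_t$ and suppose some component~$C$ of~$G[W_u]$ avoids~$V_u$. Then $C\sub W_u\setminus V_u\sub V(P)\cap V_{T_u}$, so $C$ consists of vertices of~$P$, each lying in some part below~$u$ but not in~$V_u$. If $C$ contained an endpoint of~$P$, we would get a contradiction exactly as in the previous paragraph. Otherwise $C$ consists of internal vertices of~$P$; let $x$ be the one occurring earliest along~$P$ and $x'$ its predecessor on~$P$. Then $x'\notin W_u$ --- otherwise $x'$ would lie in the same component~$C$ as~$x$, contradicting the choice of~$x$ (or the fact that~$C$ contains no endpoint) --- so $T^{x'}\cap T_u=\emptyset$. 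But $x'x\in E(G)$, so $T^{x'}$ meets~$T^{x}$; and $x\in V_{T_u}$, so $T^{x}$ meets~$T_u$. Thus the connected set~$T^{x}$ meets both $T_u$ and $T\setminus T_u$, so it contains~$u$, i.e.\ $x\in V_u$ --- contradicting $C\cap V_u=\emptyset$.

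Finally, for stability let $t_1t_2\in E(T)$ with sides $T_1\ni t_1$, $T_2\ni t_2$, and put $W_A:=\bigcup_{u\in A}W_u$. The key point is that $W_A=V_A$ whenever $A$ is disjoint from~$T_t$ or is of the form~$T_v$ with $v\in T_t$ (since then $V(P)\cap V_{T_v}\sub V_{T_v}$), whereas $W_A=V_A\cup V(P)$ whenever $t\in A$ (since $V(P)\sub V_{T_t}$). Inspecting the position of the edge $t_1t_2$ relative to the subtree~$T_t$, one sees that each of $T_1$ and~$T_2$ is of one of these forms; in the first case $W_{T_i}=V_{T_i}$ is connected by stability of~$(T,\V)$, and in the second $W_{T_i}=V_{T_i}\cup V(P)$ is connected because $V_{T_i}$ and~$V(P)$ are each connected and share an endpoint of~$P$, which lies in $V_t\sub V_{T_i}$ as $t\in T_i$. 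Hence $(T,\W)$ is stable. I expect the delicate point to be (T3) for the internal vertices of~$P$: the containment $T^z\sub T_s\sub T_t$ provided by Lemma~\ref{path shape} is precisely what keeps their new set of parts connected, and the component property hinges on the same observation; the rest is bookkeeping.
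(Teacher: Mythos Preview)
Your proof is correct and follows essentially the same approach as the paper: (T1)--(T3) are handled via the same characterization of $\{v:z\in W_v\}$ for internal vertices $z$ of~$P$ (your ``$T^z$ together with the path from~$t$ to its topmost node'' is exactly the paper's ``$A_x$ together with the $t$--$A_x$ path''), and your component argument reproves, by contradiction, the paper's one-line observation that each component of $P\cap V_{T_u}$ is a subpath with both ends in~$V_u$. The only genuine difference is in the stability step: the paper reuses the component property to connect each $x\in W_{T_i}\setminus V_{T_i}$ back to~$V_{T_i}$, whereas you compute $W_{T_i}$ explicitly (as either $V_{T_i}$ or $V_{T_i}\cup V(P)$) by a case analysis on the position of~$t_1t_2$ relative to~$T_t$ --- a slightly more direct route that avoids invoking the component property a second time.
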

\begin{proof}
 Since $V_u \sub W_u$ for all $u \in T$, every vertex and every edge of $G$ is contained in some part $W_u$. 
 
 Let $I$ be the set of internal vertices of $P$. By Lemma~\ref{path shape} there is a unique child $s$ of $t$ such that $I \sub V_{T_s} \setminus V_t$. For $x \notin I$, the set of parts containing $x$ has not changed. For $x \in I$, the set $A_x := \{ u \in T\colon x \in V_u \}$ induces a subtree of $T_s$ and $x \in W_u$ if and only if $u \in A_x$ or $u$ lies on the path joining $t$ to $A_x$. So $\{ u\colon x \in W_u \}$ is also a subtree of $T$.
 
 Note that every component of $P \cap V_{T_u}$ is a path with ends in $V_u$. Therefore every $x \in W_u \setminus V_u$ is joined to two vertices in $V_u$ and thus every component of $W_u$ contains vertices from $V_u$.
 
 Suppose now $(T, \V)$ is stable, let $t_1t_2 \in E(T)$ and $i \in \{ 1, 2\}$. Then $V_{T_i}$ is connected. For $x \in W_{T_i} \setminus V_{T_i}$ there is a $u \in T_i \cap T_t$ with $x \in W_u \setminus V_u$. But then, by the above, $W_u$ contains a path joining $x$ to $V_{T_i}$. As $V_{T_i} \sub W_{T_i}$, also $W_{T_i}$ is connected.
\end{proof}

  \end{section}

  \begin{section}{The construction} \label{algo}

We now describe a construction that turns a stable \td\ $(T, \V)$ of a connected graph into a connected \td.
First, choose a root $r$ for $T$ and keep it fixed.
It will be crucial to our analysis that the nodes of~$T$ are processed in the induced order of the tree, i.\,e.\ we enumerate the nodes $t_1,t_2,\ldots$ so that each node precedes its descendants and we process the nodes in this order.

Initially we set $W_t = V_t$ for all $t \in T$. Throughout the construction, we maintain the invariant that $(T, \W)$ is a stable \td\ \emph{extending} $(T, \V)$, by which we mean that they are \td s over the same rooted tree, satisfying $V_t \sub W_t$ for all $t \in T$.

 When processing a node $t \in T$ with disconnected part $W_t$, we use the stability of $(T, \W)$ to find a $t$-admissible path by Lemma~\ref{stable give admissible} and update $\W$ as in~(\ref{updating}).
By Lemma~\ref{add path}, this does not violate stability and it clearly reduces the number of components of $W_t$ by one.
We iterate this until $W_t$ is connected. Once that is achieved, we continue with the next node in our enumeration. 
 
Observe that each `update' only affects descendants of the current node.
Once a node $t \in T$ has been processed, so have all of its ancestors.
Hence, no further changes are made to~$W_t$ afterwards.
In particular, $W_t$ remains connected. It thus follows that, when every node has been processed, the resulting \td\ is indeed connected.
   
In order to control the size of each part $W_u$, we will use a \emph{bookkeeping graph} $Q_u$ to keep track of what we have added.
Initially, $Q_u$ is the empty graph on~$V_u$, and in each step $Q_u$ is a graph on the vertices of~$W_u$.
Whenever something is added to~$W_u$, we are considering a $t$-admissible path~$P$ for some ancestor $t$ of~$u$ and $P$ contains vertices of~$W_{T_u}$.
Every component of $P \cap W_{T_u}$ is a path with ends (and possibly also some internal vertices) in~$W_u$.
We then add $P \cap W_{T_u}$ to~$Q_u$, that is, we add all the vertices not contained in $W_u$ and all the edges of $P \cap W_{T_u}$.

  \begin{lemma} \label{Q_t acyclic}
   During every step of the procedure, $Q_u$ is acyclic.
  \end{lemma}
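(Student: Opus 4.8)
The plan is to track, for each node $u$, what $Q_u$ looks like and argue that it stays a forest. The key structural observation is the one already recorded in the construction: whenever we process an ancestor $t$ of $u$ with a $t$-admissible path $P$, the trace $P \cap W_{T_u}$ is a disjoint union of paths whose \emph{endpoints} lie in $W_u$; we add the missing internal vertices and all the edges of these paths to $Q_u$. So at each update, $Q_u$ gains several paths, each of which has both ends in the old vertex set and all its interior among genuinely new vertices.

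First I would set up an induction on the steps of the procedure (equivalently, on $|V(Q_u)|$). The base case is trivial, since $Q_u$ starts as an edgeless graph. For the inductive step, assume $Q_u$ is acyclic before an update caused by a $t$-admissible path $P$ with $t$ an ancestor of $u$, and consider what changes. Adding a single path whose two endpoints lie in the current graph and whose interior vertices are all new cannot create a cycle: the only way a new cycle could appear is through a path all of whose vertices already lie in the graph, but here every interior vertex is fresh. Adding several such paths simultaneously is fine as long as distinct components of $P \cap W_{T_u}$ do not share vertices --- and they don't, being distinct components of a subgraph of the path $P$ --- and as long as a new interior vertex of one trace-path is not simultaneously an old vertex used as an endpoint of another; again this is ruled out because all interior vertices of all the trace-paths are vertices of $P$ not in $W_u$, hence not in the old $V(Q_u) \subseteq W_u$, and the endpoints are in $W_u$. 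So each of the finitely many trace-paths is attached by its two ends to the graph-so-far, with fresh interior, and acyclicity is preserved after adding all of them.

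The point that needs a little care --- and is the main obstacle --- is confirming that $V(Q_u)$ is always a subset of $W_u$ at the moment of the update, so that the ``old vertices'' of $Q_u$ really are exactly the vertices the trace-paths can attach to, and that the endpoints of each trace-path genuinely lie in the \emph{current} $W_u$ (the one to which new vertices are about to be added). This follows from Lemma~\ref{add path}: the components of $P \cap V_{T_u}$ are paths with ends in $V_u \subseteq W_u$, and the vertices $W_u$ acquires in this step are exactly the interior vertices of these traces. Combined with the invariant $V(Q_u) \subseteq W_u$ (maintained because we only ever add to $Q_u$ vertices that we are simultaneously adding to $W_u$), this gives exactly the ``two old endpoints, fresh interior'' structure used above. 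Putting these pieces together yields that $Q_u$ remains acyclic after every step, as claimed.
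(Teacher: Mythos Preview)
Your argument has a genuine gap at its central step. You claim that ``adding a single path whose two endpoints lie in the current graph and whose interior vertices are all new cannot create a cycle,'' but this is false as a general statement about graphs. If $Q_u$ already contains a path from $a$ to $b$ and you attach a new $a$--$b$ path with fresh interior vertices, you obtain a cycle. Concretely: suppose at one stage a trace-path $a\,c\,b$ is added (with $c$ new), and at a later stage another trace-path $a\,d\,b$ is added (with $d$ new); your reasoning would allow both, yet the result is the $4$-cycle $a\,c\,b\,d$. Nothing in your write-up rules this out: you never use the defining property of a $t$-admissible path, namely that its ends lie in \emph{different components} of $W_t$.

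The paper's proof turns precisely on this point. Suppose the new trace-path has ends $a,b\in W_u$ that are already joined in $Q_u$. Each edge of the existing $a$--$b$ path in $Q_u$ came from an earlier trace of some $t_j$-admissible path $P_j$; by the processing order every such $t_j$ is an ancestor of the current node $t$, and the update rule~(\ref{updating}) therefore placed that entire segment into $W_t$ as well. Hence $a$ and $b$ lie in $W_t$ and are connected there by the concatenation of those segments. By Lemma~\ref{path shape} the current path $P$ has no internal vertices in $W_t$, forcing $a,b$ to be the ends of $P$; but then $P$ joins two vertices in the same component of $W_t$, contradicting $t$-admissibility. This is the missing mechanism: acyclicity of $Q_u$ is not a formal consequence of ``fresh interiors,'' it is enforced by the minimality built into admissible paths together with the ancestor-first processing order.
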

  
  \begin{proof}
   This is certainly true initially. Suppose now that at some step a cycle is formed in $Q_u$. By definition, it must be that an ancestor $t$ of $u$ is being processed and a $t$-admissible path $P$ is added such that two vertices $a, b \in W_u$ which were already connected in $Q_u$ lie in the same component of $P \cap W_{T_u}$.
   
   The vertices $a, b$ being connected in $Q_u$ by a path $a = a_0a_1\ldots a_n = b$ means that there have been, for every $0 \leq j \leq n-1$, ancestors $t_j$ of $u$ that added paths $P_j$ such that $a_j$, $a_{j+1}$ were consecutive vertices on a segment $S_j$ of $P_j \cap W_{T_u}$. By the order in which the nodes are processed and by~(\ref{updating}), these $t_j$ are also ancestors of $t$. Therefore when $P_j$ was added to $W_{t_j}$, the segment $S_j$ was contained in a segment of $P_j \cap W_{T_{t}}$, since $W_{T_{t}} \supseteq W_{T_u}$. Therefore, at the time $P$ is added to $W_{t}$, all these segments are contained in $W_{t}$ and, in particular, $a, b \in W_{t}$. By Lemma~\ref{path shape}, $P$ does not have internal vertices in $W_{t}$ so that $a$ and $b$ must in fact be the ends of $P$. But $W_{t}$ already contains a walk from $a$ to $b$, consisting of the segments $S_j$, so that the two do not lie in different components of $W_{t}$, contradicting the $t$-admissibility of~$P$. 
  \end{proof}

  We now show how the sparse structure of $Q_u$ reflects the efficiency of our procedure.
  
  \begin{lemma} \label{bookkeeping}
   The number of components of $Q_u$ never increases. Whenever something is added to $W_u$, the number of components of $Q_u$ decreases.
  \end{lemma}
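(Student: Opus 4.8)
The plan is to track how each update to the bookkeeping graphs $Q_u$ affects their components. Recall that an update happens only when some ancestor $t$ of $u$ is processed and a $t$-admissible path $P$ is added; then $P \cap W_{T_u}$ is added to $Q_u$. First I would observe that adding vertices and edges to a graph can never increase its number of components: the new vertices come already attached (each component of $P \cap W_{T_u}$ is a path, hence connected), and new edges can only merge existing components. This already gives the first assertion. So the content of the lemma is entirely in the second assertion: when $P \cap W_{T_u}$ is \emph{actually added} (i.e.\ $W_u$ genuinely grows, equivalently $P$ meets $W_{T_u}$), the number of components of $Q_u$ \emph{strictly} decreases.

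For the strict decrease, the key is Lemma~\ref{Q_t acyclic}: $Q_u$ stays acyclic throughout. So I would argue that adding the (forest) $P \cap W_{T_u}$ to the forest $Q_u$ must merge at least two components. Concretely: let $F := P \cap W_{T_u}$, which by the discussion in the construction is a disjoint union of nontrivial paths, each with both ends (and possibly internal vertices) in $W_u$. Pick one such path-segment $S$ of $F$; it is nontrivial since $P$ has its two endpoints in $W_t \subseteq$ the ancestor's part but its internal vertices, by Lemma~\ref{path shape}, in $V_{T_s}\setminus V_t$, so any segment of $P$ lying in $W_{T_u}$ contains at least one edge. The two ends $a,b$ of $S$ lie in $W_u$, hence were already vertices of $Q_u$ before this step. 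Now either $a$ and $b$ lie in different components of the old $Q_u$ — in which case adding $S$ joins them and the component count drops — or they lie in the same component of the old $Q_u$, meaning $Q_u$ already contained an $a$--$b$ path; but then adding $S$ (which, having at least one edge and ends $a,b$, is itself an $a$--$b$ path internally disjoint from $Q_u$ or sharing only further vertices of $W_u$) would close a cycle in $Q_u$, contradicting Lemma~\ref{Q_t acyclic}. Hence the first alternative always holds and the number of components of $Q_u$ strictly decreases.

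One subtlety I would be careful about is that $S$ might share internal vertices with $Q_u$, not just its endpoints — the segment $P \cap W_{T_u}$ can touch $W_u$ in the middle. But this does not affect the argument: if $S$ and $Q_u$ share any vertex that is already connected in $Q_u$ to one of $a,b$, then appending the relevant sub-path of $S$ still either merges two old components or creates a cycle, and the same case analysis, applied to the first pair of vertices of $S$ that are already co-connected in $Q_u$, yields the contradiction with acyclicity. So in every case, adding $F$ to $Q_u$ strictly decreases the number of components, which is exactly the claim. The main obstacle is precisely this bookkeeping about where $S$ meets the old $Q_u$; once one notes that acyclicity of $Q_u$ rules out the "already connected" case, the rest is immediate.
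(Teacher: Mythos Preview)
Your approach is the same as the paper's---use the acyclicity of $Q_u$ (Lemma~\ref{Q_t acyclic}) to force the two ends of a relevant segment into different components---but the execution has a small gap. You assert that \emph{every} component of $P\cap W_{T_u}$ is a nontrivial path, and justify this by appealing to Lemma~\ref{path shape}; that inference is a non sequitur, and the claim is in fact false (a single vertex of $P$ can lie in $W_{T_u}$ while both its $P$-neighbours do not). Relatedly, your parenthetical ``equivalently $P$ meets $W_{T_u}$'' is wrong: $P$ can meet $W_{T_u}$ entirely inside $W_u$, in which case $W_u$ does not grow.

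The fix, which is exactly what the paper does, is to \emph{use the hypothesis}: since something is added to $W_u$, there is a vertex $v\in P\cap (W_{T_u}\setminus W_u)$. The segment $S$ of $P\cap W_{T_u}$ containing $v$ has both ends in $W_u$, so $v$ is an internal vertex of $S$ and hence $S$ has two distinct ends $a,b\in W_u$. Now your acyclicity argument applies verbatim: $a$ and $b$ must lie in different components of the old $Q_u$, and adding $S$ merges them. With this correction the proof is complete, and your final ``subtlety'' paragraph becomes unnecessary---once $S$ contains a vertex outside the old $Q_u$, adding $S$ would create a cycle whenever $a$ and $b$ were already connected, regardless of any other shared vertices.
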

  
  \begin{proof}
   Suppose that in an iteration a change is made to $Q_u$. Then an ancestor $t$ of $u$ is being processed and the chosen path $P$ meets $W_{T_u}$. Every component of $P \cap W_{T_u}$ is a path with both ends in~$W_u$. Therefore, every newly introduced vertex is joined to a vertex in $Q_u$ and no new components are created.
   
   If a vertex from $P \cap (W_{T_u} \setminus W_u)$ is added to $W_u$, the segment containing it has length at least two and has two ends $a, b \in Q_u$. By Lemma~\ref{Q_t acyclic}, $Q_u$ must remain acyclic, so that $a$ and $b$ in fact lie in different components of~$Q_u$, which are now joined. 
  \end{proof}

The previous lemma allows us to control the number of iterations that affect a fixed node $t\in T$.
The second key ingredient for the proof of Theorem~\ref{main result} will be to bound the length of each of the paths used, see Section~\ref{bound}.

  \begin{proposition} \label{metatheorem}
  
Let $G$ be a connected graph, $(T, \V)$ a rooted stable \td\ of~$G$. For $t \in T$ let $m_t \geq 1$ be such that for every stable \td\ $(T, \W)$ extending $(T, \V)$ and every ancestor~$t'$ of~$t$, the length of a $t'$-admissible path in $(T, \W)$ does not exceed~$m_t$. Then the construction produces a connected \td\ $(T, \U)$ in which for all $t \in T$
\begin{equation*}
	|U_t| \leq m_t(|V_t| - 1) + 1.
\end{equation*}
\end{proposition}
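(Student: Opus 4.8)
The plan is to track, for a fixed node $t\in T$, how much is added to $W_t$ over the entire run of the construction, and to control this via the bookkeeping graph $Q_t$. First I would observe that $W_t$ only ever changes while some ancestor $t'$ of $t$ (including $t$ itself) is being processed, since by~(\ref{updating}) an update at a node $u$ affects only $W_v$ for $v\in T_u$. When such a change happens, we are adding a $t'$-admissible path $P$ of the current stable \td\ extending $(T,\V)$, so by hypothesis $P$ has length at most $m_t$, i.e.\ at most $m_t-1$ internal vertices; the portion actually added to $W_t$ is $V(P)\cap(W_{T_t}\setminus W_t)$, which is even smaller, so each single update adds at most $m_t-1$ new vertices to $W_t$. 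Next I would invoke Lemma~\ref{bookkeeping}: each such update strictly decreases the number of components of $Q_t$. Since $Q_t$ starts as the edgeless graph on $V_t$, it begins with $|V_t|$ components, and the number of components never increases and stays at least $1$; hence the total number of updates ever made to $W_t$ is at most $|V_t|-1$.

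Combining the two bounds gives
\begin{equation*}
	|U_t| \leq |V_t| + (|V_t|-1)(m_t-1) = m_t(|V_t|-1) + 1,
\end{equation*}
which is exactly the claimed inequality. (Here $U_t$ denotes the final part at $t$, i.e.\ the value of $W_t$ after every node has been processed; that the construction terminates and yields a connected \td\ is exactly what was argued in Section~\ref{algo}, using that each update to $W_t$ reduces the number of components of $W_t$ by one and that $W_t$ is never touched again after $t$ has been processed.)

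The only subtle point — and the one I would state carefully — is that the bound "$m_t$ bounds the length of every relevant admissible path" must be applied at the right moment: the path $P$ added while processing an ancestor $t'$ of $t$ is a $t'$-admissible path in the current \td\ $(T,\W)$, and $(T,\W)$ is at that moment a stable \td\ extending $(T,\V)$ (this is the maintained invariant from Section~\ref{algo}, together with Lemma~\ref{add path}). So the hypothesis of the proposition applies verbatim to $P$, giving $|V(P)|\le m_t+1$ and hence at most $m_t-1$ internal vertices. One should also note $m_t\ge 1$ ensures the bound $m_t(|V_t|-1)+1\ge |V_t|$ is consistent even when no update ever occurs at $t$. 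Apart from this bookkeeping, the argument is just arithmetic, so I do not anticipate a genuine obstacle; the main care is in correctly identifying which steps change $W_t$ and matching them one-to-one with component-mergers of $Q_t$.
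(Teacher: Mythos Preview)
Your proposal is correct and follows essentially the same argument as the paper: bound the number of updates to $W_t$ by $|V_t|-1$ via Lemma~\ref{bookkeeping}, bound the vertices added per update by $m_t-1$ via the hypothesis on admissible path lengths, and combine. The additional remarks you make (that only ancestors of $t$ can affect $W_t$, and that the maintained invariant guarantees the hypothesis applies to the current $(T,\W)$) are exactly the points the paper uses implicitly.
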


  \begin{proof}
   We have already shown that $(T,\U)$ is connected. By Lemma~\ref{bookkeeping}, every time something was added to $W_t$, the number of components of $Q_t$ decreased and it never increased. Since initially $Q_t$ had precisely $|V_t|$ components, this can only have happened at most $|V_t| - 1$ times. In each such iteration we added some internal vertices of a $t'$-admissible path in a stable \td\ extending $(T, \V)$ for some ancestor~$t'$ of~$t$, thus at most $m_t-1$ vertices. In total, we have
   \begin{equation*}
    |U_t| \leq |V_t| + (m_t-1)(|V_t| - 1) = m_t(|V_t| - 1) + 1.\qedhere
   \end{equation*}
  \end{proof}

%
%

\end{section}

\begin{section}{Bounding the length of admissible paths}\label{bound}
 We will now use ideas from~\cite{MalteRhd} to bound the length of $t$-admissible paths in stable \td s. Together with Proposition~\ref{metatheorem}, this will imply our main result.
 
 \begin{lemma} \label{geo cycle}
  Let~$G$ be a graph and~$\Gamma$ a set of cycles that generates its cycle space. Let $(T, \V)$ be a stable \td\ of ~$G$ and $t_1t_2 \in E(T)$. Suppose that $V_{t_1}\cap V_{t_2}$ meets two distinct components of~$V_{t_1}$. Then there is a cycle~$C \in \Gamma$ such that some component of $C \cap V_{T_2}$ meets~$V_{t_1}$ in two distinct components.
 \end{lemma}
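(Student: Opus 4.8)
The goal is to find a generating cycle $C \in \Gamma$ that ``witnesses'' the separation failure: some component of $C \cap V_{T_2}$ meets $V_{t_1}$ in two distinct components of $V_{t_1}$. The natural strategy is to start from a path realizing the failure and then close it up into a cycle using the cycle space.

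First I would fix two vertices $a, b \in V_{t_1} \cap V_{t_2}$ lying in distinct components of $V_{t_1}$. Since $(T,\V)$ is stable, $V_{T_2}$ is connected in $G$, so there is an $a$--$b$ path $P$ inside $V_{T_2}$. On the other hand, $a$ and $b$ lie in different components of $V_{t_1}$, so there is no $a$--$b$ path inside $V_{t_1}$; equivalently (using that $V_{t_1}\cap V_{t_2}$ is the separator) the graph $G$ does contain \emph{some} $a$--$b$ path $P'$ — e.g. we may take one through $V_{T_1}$, since $V_{T_1}$ is connected by stability and contains both $a$ and $b$ — and this path necessarily leaves $V_{t_1}$, in fact it must contain an edge crossing into $V_{T_1}\setminus V_{t_1}$. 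Concatenating $P$ and $P'$ yields a closed walk, hence an element $Z$ of the cycle space, which decomposes as a sum (symmetric difference of edge sets) $Z = \sum_{i} C_i$ with each $C_i \in \Gamma$.

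The key step is then a parity/connectivity argument on this decomposition. Consider the separation $(V_{T_1}, V_{T_2})$ with separator $S := V_{t_1}\cap V_{t_2}$, and in particular the partition of $V_{t_1}$ into components; call $A$ the component containing $a$ and $B \ne A$ the one containing $b$. I want to conclude that at least one $C_i$ has a component of $C_i \cap V_{T_2}$ meeting both $A$ and $B$ — this is exactly the desired conclusion (note $A$ and $B$ are distinct components of $V_{t_1}$, so ``meets $V_{t_1}$ in two distinct components'' is witnessed). The mechanism: look at the ``trace'' of each $C_i$ on $V_{T_2}$; each component of $C_i \cap V_{T_2}$ is a subpath whose endpoints lie in $S$ (since to leave $V_{T_2}$ one must pass through the separator), and hence lie in various components of $V_{t_1}$. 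Summing over $i$, the collection of all these traces, viewed as an auxiliary multigraph on the vertex set {components of $V_{t_1}$ met by $Z$}, must contain an $A$--$B$ connection, because $Z$ itself connects $a$ to $b$ through $V_{T_2}$ while its $V_{T_1}$-part cannot connect $A$ to $B$. A minimal-counterexample or parity argument (each ``internal'' component of $V_{t_1}$ has even degree in this auxiliary structure, while $A$ and $B$ have odd degree) then forces some single $C_i$ to have a trace-component running between $A$ and $B$.

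The main obstacle I anticipate is making the ``auxiliary multigraph'' bookkeeping rigorous: one has to be careful that a component of $C_i \cap V_{T_2}$ can touch $S$ (and thus various components of $V_{t_1}$) in more than two vertices, so it is not literally an edge but a hyperedge-like object, and that the edges of $Z$ lying inside $V_{T_1}$ (in particular inside the single component $A$ or inside $B$) do not help connect $A$ to $B$. Handling this cleanly probably means replacing ``component of $V_{t_1}$'' by ``component of $V_{T_1}$'' (these refine to the same partition on $S$ by (T3)) and then arguing that contracting each such component and each trace-component of each $C_i$ and of $P$ turns $Z$ into a closed walk in a graph where $a$'s blob and $b$'s blob are distinct and only the $C_i$-traces can join them — so one of them must. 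I'd expect the write-up to phrase this via: if no $C_i$ had the desired property, then every trace-component of every $C_i$ stays within a single component of $V_{t_1}$ (on its endpoints in $S$), and likewise the analysis of $P'\subseteq V_{T_1}$, contradicting that $Z$ links $a$ and $b$.
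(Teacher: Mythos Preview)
Your plan is essentially the paper's own proof: use stability to find an $a$--$b$ path $P\subseteq V_{T_2}$ and another path $Q\subseteq V_{T_1}$, write $P+Q=\bigoplus\C$ with $\C\subseteq\Gamma$, assume for contradiction that every component of every $C\cap V_{T_2}$ ($C\in\C$) meets only one component of~$V_{t_1}$, and derive a parity contradiction.

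Two refinements from the paper will let you replace the vague ``auxiliary multigraph'' by a clean count and dispose of the hyperedge worry. First, choose $P$ \emph{shortest} in $V_{T_2}$ among paths joining two components of~$V_{t_1}$; then its internal vertices avoid $V_{t_1}$ entirely, so $P+Q$ contributes exactly the two edges $xx',yy'$ between $X:=V_{t_1}\cap V_{t_2}$ and $V_{T_2}\setminus X$, one at each end. Second, for a fixed component $D$ of $V_{t_1}$, do not build a graph on the components but simply count $|E_C(D,V_{T_2}\setminus X)|$ for each $C\in\C$: under the assumption, each component $S$ of $C\cap V_{T_2}$ either misses $D$ or has all its $X$-vertices in $D$, and either way contributes evenly (a path with both ends in $X$, or a cycle, crosses between $X$ and $V_{T_2}\setminus X$ an even number of times). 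Summing and comparing with $P+Q$ gives $|E_{P+Q}(D,V_{T_2}\setminus X)|$ even for every $D$, contradicting that $xx'$ is the sole such edge for the component containing~$x$. Finally, your suggested fix of passing to ``components of $V_{T_1}$'' is a dead end: $V_{T_1}$ is connected by stability, so there is only one such component; stay with components of~$V_{t_1}$.
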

 
  \begin{proof}
   As~$V_{T_2}$ is connected, we can choose a shortest path $P$ in $V_{T_2}$ joining two components of~$V_{t_1}$. Let $x, y \in V_{t_1}$ be its ends and note that all internal vertices of~$P$ lie in $V_{T_2} \setminus V_{t_1}$. As~$V_{T_1}$ is connected as well, we also find a path $Q \sub V_{T_1}$ joining~$x$ and~$y$, which is internally disjoint from~$P$. By assumption, there is a subset~$\C$ of~$\Gamma$ such that $P + Q = \bigoplus \C$. We subdivide~$\C$ as follows: $\C_1$ comprises all those cycles which are entirely contained in $V_{T_1} \setminus V_{T_2}$, $\C_2$~those in $V_{T_2} \setminus V_{T_1}$ and~$\C_X$ those that meet $X:=V_{t_1}\cap V_{t_2}$.
   
   Assume now for a contradiction that for every $C \in \C_X$ and every component~$S$ of $C \cap V_{T_2}$ there is a unique component~$D_S$ of~$V_{t_1}$ met by~$S$. Note that~$S$ is a cycle if $C \sub V_{T_2}$ and a path with ends in~$X$ otherwise. Either way, the number of edges of $S$ between $X$ and $V_{T_2} \setminus X$, denoted by $|E_S(X,V_{T_2}\setminus X)|$, is always even. It thus follows that for any component $D$ of $V_{t_1}$ 
   \begin{equation*}
    |E_C(D, V_{T_2} \setminus X)| = \sum_{S \sub C \cap V_{T_2} } |E_S(D, V_{T_2} \setminus X)| = \sum_{S \colon D_S = D} |E_S(X, V_{T_2} \setminus X)|
   \end{equation*}
is even. But then also the number of edges in~$\bigoplus \C_X$ between~$D$ and $V_{T_2} \setminus X$ is even. Since the edges of~$\bigoplus \C_1$ and~$\bigoplus \C_2$ do not contain vertices from~$X$, we have
\begin{equation*}
 E_{\bigoplus \C_X}(X, V_{T_2} \setminus X) = E_{P + Q}(X, V_{T_2} \setminus X) = \{ xx', yy' \},
\end{equation*}
where~$x'$ and~$y'$ are the neighbours of~$x$ and~$y$ on~$P$, respectively. Due to parity,~$x$ and~$y$ need to lie in the same component of~$V_{t_1}$, contrary to definition.
  \end{proof}

  \begin{proof}[Proof of Theorem~\ref{main result}]
Since both parameters appearing in the bound do not increase when passing to a component of~$G$ and as we can combine connected tree-decompo\-sitions of the components to obtain a connected \td\ of~$G$, it suffices to consider the case that $G$ is connected.

   We use Lemma~\ref{geo cycle} to bound the length of $t$-admissible paths in any stable \td\ of~$G$. Let $\ell = \ell(G)$ and~$\Gamma$ be the set of all cycles of length at most~$\ell$, which by definition generates the cycle space of~$G$.
   Let $(T, \W)$ be a rooted stable \td, $t \in T$ and $P$ a $t$-admissible path. By Lemma~\ref{path shape} there is a child $s$ of $t$ such that all internal vertices of $P$ lie in $V_{T_s} \setminus V_t$. By Lemma~\ref{geo cycle} we find a cycle $C \in \Gamma$ and a path $S \sub C \cap V_{T_s}$ joining distinct components of~$V_{t_1}$. Since $S \sub V_{T_t}$ and $P$ was chosen to be a shortest such path, we have $|P| \leq |S|$. The ends of $S$ lie in distinct components of $V_{t_1}$ and are therefore, in particular, not adjacent, so that overall   
   \begin{equation*}
    |V(P)| \leq |V(S)| \leq |V(C)| - 1 \leq \ell-1.
   \end{equation*}

By Proposition~\ref{stable optimal exists}, $G$ has a stable \td\ $(T, \V)$ of width $\tw(G)$.
Proposition~\ref{metatheorem} then guarantees that we find a connected \td\ of width at most $(\ell-2)\tw(G)$.
  \end{proof}

\end{section}

\begin{section}{Brambles} \label{brambles}
%
Recall that a \emph{bramble} is a collection of connected vertex sets of a given graph such that the union of any two of them is again connected. A \emph{cover} of a bramble is a set of vertices that meets every element of the bramble.
The aim of this section is to derive a strengthened upper bound on the \emph{connected order} of a bramble, the minimum size of a connected cover.

\begin{lemma} \label{easy bramble bound}
	Suppose $(T, \V)$ is a \td\ of a graph $G$ and $k \in \mathbb{N}$ an integer such that for every $t \in T$ there is a connected set of size at most~$k+1$ containing $V_t$. Then $G$ has no bramble of connected order greater than $k+1$.
\end{lemma}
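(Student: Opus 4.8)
The plan is to reduce the claim to the classical fact that every \td\ has a part meeting every element of a given bramble, and then to use the hypothesis to enlarge that part to a connected cover of bounded size.

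First I would fix an arbitrary bramble $\B$ of $G$ and show that some part $V_t$ meets every element of $\B$. Suppose not. If $|V(T)| = 1$ this is absurd, since then $V_t = V(G)$; so assume $|V(T)| \geq 2$. For each $t \in T$ choose $B_t \in \B$ with $B_t \cap V_t = \emptyset$. As $B_t$ is connected and disjoint from $V_t$, it is contained in $V_{T'} \setminus V_t$ for exactly one component $T'$ of $T - t$; direct the edge joining $t$ to $T'$ away from $t$. This assigns to each of the $|V(T)|$ nodes one of the $|V(T)|-1$ edges, so two distinct nodes $t_1,t_2$ are assigned the same edge $e$; since $e$ is incident with both, $e = t_1t_2$, and $e$ has been directed away from both endpoints. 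Writing $T_1,T_2$ for the components of $T-e$ with $t_i \in T_i$, this says $B_{t_1} \sub V_{T_2}$ and $B_{t_2} \sub V_{T_1}$. But $B_{t_1}$ avoids $V_{t_1} \supseteq V_{t_1}\cap V_{t_2}$ and $B_{t_2}$ avoids $V_{t_2} \supseteq V_{t_1}\cap V_{t_2}$, so the two lie in different components of $G - (V_{t_1}\cap V_{t_2})$; hence $B_{t_1}\cup B_{t_2}$ is disconnected, contradicting the bramble property.

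With such a node $t$ in hand, I would invoke the hypothesis to find a connected set $C$ with $V_t \sub C$ and $|C| \leq k+1$. Since $C \supseteq V_t$, it meets every element of $\B$ too, so $C$ is a connected cover of $\B$ of size at most $k+1$, and thus the connected order of $\B$ is at most $k+1$. As $\B$ was arbitrary, $G$ has no bramble of connected order exceeding $k+1$.

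I expect the only real work to be in the first step — producing a part that covers the bramble. This is a standard argument, but it relies on the little orientation-and-counting observation above (there are fewer edges than nodes, so some edge gets directed away from both its endpoints) together with the trivial single-node case; once it is available, the conclusion is immediate from the hypothesis.
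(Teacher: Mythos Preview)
Your proposal is correct and follows exactly the same approach as the paper: show that some part $V_t$ covers the bramble, then enlarge it to the promised connected superset. The paper simply cites the standard argument (the proof of \cite[Theorem~12.3.9]{DiestelBook10noEE}) for the first step, while you spell out a correct version of it via the orientation-and-pigeonhole trick; the second step is identical.
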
	
\begin{proof}
	Let $\B$ be a bramble of $G$. By a standard argument, see e.\,g.\ the proof of~\cite[Theorem 12.3.9]{DiestelBook10noEE}, one of the parts $V_t$ of $(T, \V)$ covers $\B$ and thus so does any connected set containing~$V_t$.
\end{proof}

Let us call the smallest integer~$k$ such that there is a \td\ satisfying the hypothesis of Lemma~\ref{easy bramble bound} the \emph{weak connected tree-width} $\wctw(G)$ of the graph~$G$. Clearly $\wctw(G) \leq \ctw(G)$, as any connected \td\ of minimum width satisfies the hypothesis. Theorem~\ref{bramble bound} follows directly from Lemma~\ref{easy bramble bound} and the following.

\begin{theorem} \label{wctw bound}
	Let~$G$ be a graph containing a cycle. Then 
	\[
	\wctw(G) \leq \tw(G)\lfloor \frac{\ell(G)}{2} \rfloor .
	\]
\end{theorem}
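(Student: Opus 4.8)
The plan is to mimic the proof of Theorem~\ref{main result}, but with a sharper accounting of how much each part grows and a cleverer choice of which paths to add. The key observation is that in the construction of Section~\ref{algo}, when we process a node $t$ and add a $t$-admissible path $P$ of length at most $\ell-1$, we may split $P$ at an interior vertex: the path $P$ has two ends $x,y$ in distinct components of $W_t$, and writing $d = |V(P)|-1 \le \ell-2$ for its length, one of the two sub-paths of $P$ obtained by deleting a well-chosen interior vertex has at most $\lfloor d/2 \rfloor$ edges. The idea is that rather than insisting the final decomposition be a \emph{\td} with connected parts, we only need a \emph{\td} together with, for each part $V_t$, a connected superset of size at most $k+1$ --- and such a connected superset can be obtained by joining the (at most $|V_t|-1$) components of $V_t$ using \emph{arbitrary} connecting paths in $G$, not necessarily ones that respect (T3).

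Concretely, I would first apply Proposition~\ref{stable optimal exists} to get a stable \td\ $(T,\V)$ of width $\tw(G)$, and then run essentially the construction of Section~\ref{algo}, but modified so that at each step, instead of adding an entire $t$-admissible path $P$, we add a \emph{shorter} path witnessing connectivity. Precisely: given the $t$-admissible path $P$ with ends $x,y$ in different components of $W_t$, pick an interior vertex $z$ of $P$ and let $P'$ be the shorter of the two segments $xPz$ and $zPy$; then $P'$ together with (the part of) $W_t$ already present still reduces the number of components of the bookkeeping graph $Q_t$ by one, because $Q_t$ stays acyclic and the endpoints of the added segment, being an end of $P$ and an interior vertex $z$, lie in different $Q_t$-components. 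Since $|P| \le \ell-1$, i.e.\ $P$ has at most $\ell-2$ edges, the shorter segment $P'$ has at most $\lfloor (\ell-2)/2 \rfloor = \lfloor \ell/2 \rfloor - 1$ edges, hence adds at most $\lfloor \ell/2 \rfloor - 1$ new vertices. Running the analogue of Proposition~\ref{metatheorem} with $m_t - 1$ replaced by $\lfloor \ell/2 \rfloor - 1$ then gives, for every $t$, a connected set of size at most $(|V_t|-1)(\lfloor \ell/2\rfloor - 1) + |V_t| = |V_t|\lfloor \ell/2 \rfloor - \lfloor \ell/2 \rfloor + 1 \le (\tw(G)+1)\lfloor \ell/2\rfloor - \lfloor \ell/2\rfloor + 1 = \tw(G)\lfloor \ell/2\rfloor + 1$, i.e.\ $\wctw(G) \le \tw(G)\lfloor \ell(G)/2 \rfloor$.

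The subtlety --- and the step I expect to be the main obstacle --- is that adding only a \emph{segment} $P'$ of a $t$-admissible path destroys the reason $W_t$ is eventually connected: the two ends $x,y$ are no longer joined inside $W_t$, only $x$ (or $y$) is joined to an interior vertex $z$, and $z$ lies in $W_{T_s}$ for a child $s$ of $t$. One must check that after processing all descendants of $t$ (which happens after $t$ in the induced order), the part $W_t$ is nonetheless connected, or --- more carefully --- that the \emph{connected set we output for $t$} (rather than $W_t$ itself) is connected. The cleanest way around this is probably not to maintain a genuine \td\ at all after the modification, but to keep the original stable \td\ $(T,\V)$ fixed and, for each $t$, directly construct a connected superset of $V_t$ by the following local argument: the components of $V_t$ number at most $|V_t|-1$, and one can iteratively join them, each time choosing a shortest path in $G$ between two current components and keeping only its shorter half at the new join vertex, invoking Lemma~\ref{geo cycle} (via a stable \td) to bound the shortest such path by $\ell-1$ vertices and an acyclicity argument à la Lemma~\ref{Q_t acyclic}--\ref{bookkeeping} to guarantee progress. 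I would need to verify that Lemma~\ref{geo cycle}'s hypothesis keeps being met as we enlarge the set --- this requires re-examining whether the enlarged sets still arise as parts of a stable \td\ extending $(T,\V)$, which is exactly what Lemma~\ref{add path} and the construction of Section~\ref{algo} were designed to preserve. So in the end I would present it as: run the Section~\ref{algo} construction verbatim to get the connected \td\ $(T,\U)$ of Theorem~\ref{main result}, but additionally record, at the moment each $t$-admissible path $P$ is added while processing an ancestor, only the shorter half of $P\cap W_{T_t}$ into an auxiliary "output graph" $R_t$ on $V_t$; argue $R_t$ stays acyclic and gains a component-merge each time, so $R_t$ ends up connected of size $\le \tw(G)\lfloor \ell/2\rfloor + 1$ and contains $V_t$, which is precisely the hypothesis of Lemma~\ref{easy bramble bound}.
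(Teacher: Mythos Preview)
There is a genuine gap. Your central claim --- that adding only the shorter half $P' = xPz$ of a $t$-admissible path $P$ ``reduces the number of components of the bookkeeping graph by one'' --- is false. The midpoint $z$ is an interior vertex of $P$, hence a \emph{new} vertex not previously in $W_t$ (or $R_t$). Adding the segment $xPz$ therefore just appends a dangling path to the component of $x$; it does not touch the component of $y$ at all, so no merge occurs. The same objection applies to your final variant: each ``shorter half of $P\cap W_{T_t}$'' has one end in the current $R_t$ and one end at a brand-new midpoint, so $R_t$ keeps exactly $|V_t|$ components throughout and is never connected. If instead you keep iterating the modified construction, the next $t$-admissible path may well be the leftover segment $zPy$, and you end up adding all of $P$ piecemeal with no saving.

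The paper obtains the factor $\lfloor \ell/2\rfloor$ by a different and much simpler device: for each node $t$, \emph{re-root $T$ at $t$} and run the construction of Section~\ref{algo} only long enough to make $W_t$ connected. Since $t$ is now the root, $T_t = T$ and hence $W_{T_t} = V(G)$, so a $t$-admissible path is a shortest path anywhere in $G$ joining two components of $W_t$. Lemma~\ref{geo cycle} then yields a cycle $C$ of length at most $\ell$ meeting $W_t$ in vertices $x,y$ from distinct components; \emph{both} arcs of $C$ between $x$ and $y$ lie in $W_{T_t}=V(G)$ and join distinct components of $W_t$, so the shorter one has length at most $\lfloor \ell/2\rfloor$, which bounds the $t$-admissible path directly. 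Proposition~\ref{metatheorem} with $m_t=\lfloor \ell/2\rfloor$ finishes. The point you missed is that the factor-two loss in Theorem~\ref{main result} comes from only one arc of $C$ being guaranteed to lie in $W_{T_s}$; re-rooting at $t$ removes that restriction.
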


\begin{proof}
It  suffices to consider the case where~$G$ is connected, since all three parameters involved are simply their respective maxima over the components of~$G$.
Let $\ell = \ell(G)$ and~$\Gamma$ be the set of all cycles of~$G$ of length at most~$\ell$, which by definition generates the cycle space of~$G$.
 By Proposition~\ref{stable optimal exists}, $G$ has a stable \td\ $(T, V)$ of width $\tw(G)$. We now show that every part $V_t$ of $(T, \V)$ is contained in a connected set of size at most $(|V_t| - 1)\lfloor \frac{\ell}{2} \rfloor + 1$.
 
 Let now $t \in T$ be fixed. Root $T$ at $t$ and apply the construction from Section~\ref{algo}. As $t$ does not have any ancestors other than itself, the statement follows from Proposition~\ref{metatheorem} once we have verified that all $t$-admissible paths in a stable \td\ $(T, \W)$ extending $(T, \V)$ have length at most~$\ell /2$. So let $(T, \W)$ be a stable \td\ of~$G$ extending $(T, \V)$ and let~$P$ be a $t$-admissible path. By Lemma~\ref{path shape}, all its internal vertices lie in $W_{T_s} \setminus W_t$ for some child~$s$ of~$t$. By Lemma~\ref{geo cycle} we find a cycle~$C \in \Gamma$ that meets~$W_t$ in two vertices~$x, y$ from distinct components of~$W_t$. Either segment of~$C$ between~$x$ and~$y$ lies in~$W_{T_t}$ and joins two components of~$W_{t}$, so by minimality~$P$ has length at most $\lfloor \ell /2 \rfloor$.
\end{proof}

Diestel and M\"uller~\cite{MalteRhd} showed that if a graph~$G$ contains a geodesic cycle of length~$k$, then~$G$ has a bramble of connected order $\lceil k/2 \rceil + 1$. Combined with the upper bound of Theorem~\ref{bramble bound}, this implies Corollary~\ref{cycle space tw}.
	
\end{section}

\begin{section}{A graph of large connected tree-width}\label{example}
In this section we discuss an example that shows that our upper bound on connected tree-width is tight up to a constant factor. Given $n, k \in \mathbb{N}$, $n \geq 3$, obtain~$G$ from the complete graph on~$n$ vertices by subdividing every edge with~$k$ newly introduced vertices. As subdivision does not alter tree-width, we have $\tw(G) = n-1$. The cycle space of~$G$ is generated by the collection of all subdivisions of triangles of the underlying complete graph, so $\ell (G) = 3(k+1)$. We will now show that the connected tree-width of~$G$ is precisely $r := {(n-1)}{(k+1)} - \lfloor (k+1)/2 \rfloor$. The bound of Theorem~\ref{main result} is therefore asymptotically tight up to a factor of~3.

Let $A \sub V(G)$ denote the set of vertices of degree~$n-1$. The graph~$G$ thus consists of~$A$ and, for any two $a, b \in A$, a path~$P_{ab}$ of length~$k+1$ between them. We first describe a bramble that cannot be covered with any connected set of size at most~$r$. The lower bound on the connected tree-width of~$G$ then follows from Lemma~\ref{easy bramble bound}. Any connected set~$X$ consists of some vertices from~$A$, its \emph{branchvertices}, some \emph{internal vertices} $X^0$ on paths joining two of its branchvertices and possibly some \emph{additional vertices}. Any connected set with~$j$ branchvertices must have at least~$(j-1)k$ internal vertices, resulting in a minimum size of $(j-1)(k+1) + 1$. Let~$X$ be a connected set of size at most~$r$. Then, by the above, $X$ cannot contain all the vertices of~$A$ and, moreover, all vertices of $A \setminus X$ lie in the same component $C(X)$ of $G - X$: If $a, b \in A \setminus X$, then by connectedness either $X \cap P_{ab} = \emptyset$ or $X \sub P_{ab}$, in which case~$a$ and~$b$ can be joined through some other $c \in A$. Let~$\B$ be the collection of all these components $C(X)$ for $X \sub V$ connected of size at most~$r$.

Clearly, $\B$ can not be covered by any connected set of size at most~$r$, so it only remains to verify that~$\B$ is indeed a bramble. Let $X_1, X_2 \sub V$ be two connected sets of size at most~$r$, containing~$j_1, j_2$ vertices of~$A$, respectively. Suppose that $C(X_1)$ and $C(X_2)$ did not touch. Then for every pair $(a, b)$ with $a \in A \setminus X_1$, $b \in A \setminus X_2$, the sets~$X_1$ and~$X_2$ must have a common vertex on~$P_{ab}$. By definition, all these are additional vertices for both sets, so

\begin{align*}
	|X_1| + |X_2| &\geq |X_1^0| + |X_2^0| + (n-j_1)(n -j_2)(k+1) \\
	&= (k+1)( n^2 - (n-1)(j_1 + j_2) + j_1j_2 - 2) + 2 .
\end{align*}
This expression, seen as a function of~$j_1$ and~$j_2$, assumes its minimum for~$j_1 = j_2 = n-1$. We thus conclude $|X_1| + |X_2| \geq 2(k+1)(n-1) - k + 1$, hence the larger of the two sets has size at least $r+1$, a contradiction.

We now describe a connected \td\ of width~$r$. Fix two $a, b \in A$ and let $A^- := A \setminus \{ a, b \}$. Let~$T$ be a star with root~$s$ and leaves $t, u_1, \ldots , u_m$ with $m = \binom{n-2}{2}$. Each~$V_{u_i}$ consists of a different path~$P_{cd}$ with $c, d \in A^-$. Let~$V_s$ consist of the union of all~$P_{bc}$ with $c \in A^-$ and the first~$\lceil (k+1)/2 \rceil$ vertices from $P_{ba}$. Define~$V_t$ similarly.
This \td\ has the desired width. 

\end{section}

\begin{section}{A counterexample for duality}\label{duality}
In this section, we present a graph whose connected tree-width is larger than the largest connected order of any of its brambles.
Hence, we disprove the duality conjecture of Diestel and M\"uller~\cite{MalteRhd} for connected tree-width.

Let $n\geq 4$ be an integer.
For $i=0,1,2$, let $P_i=x^i_1\ldots x^i_{2n}$ be three pairwise disjoint paths and $Q=y_1\ldots y_{4n}$ another path disjoint from each~$P_i$.
Between every two vertices $x^i_j, y_k$ we add a new internally disjoint path~$P_{j,k}^i$ of length $5n$, except for $k=n+j$, where they have length~$n$.
Let $G'$ be the resulting graph.
Let $G$ be the disjoint union of $G'$ with a cycle~$C$ of length $16n+2$, where we choose two antipodal vertices $a,b$, i.\,e.\ vertices of~$C$ with $d_C(a,b)=8n+1$, and add the edges $ax^0_1, ay_1$ and $bx^0_{2n}, by_{4n}$.
Figure~\ref{fig:duality} shows the graph~$G$ without $P_2$ and its attachment paths to~$Q$.

\begin{figure}[h]
\begin{center}
\includegraphics{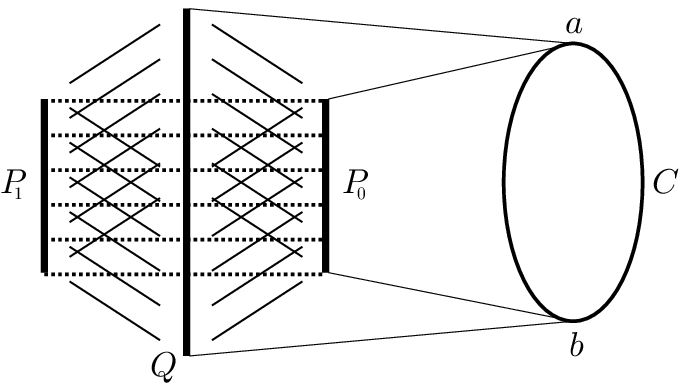}
\caption{The graph $G$ without $P_2$ and its attachment paths.}\label{fig:duality}
\end{center}
\end{figure}

We claim that the connected order of any of its brambles is at most $5n+3$ and that its connected tree-width is at least~$6n$. Thus, up to additive constants, these parameters differ at least by a factor of $6/5$.

We will now give a \td\ demonstrating that $\wctw(G) \leq 5n+2$, which is sufficient to prove the upper bound on the connected order of any bramble by Lemma~\ref{easy bramble bound}. 
Start with~$V_{t_0} := V(Q) \cup \{ a, b \}$, which is connected and of size $4n + 2$. Clearly, $G - V_{t_0}$ consists of five components: each of the~$P_i$ along with their attachments to~$V_{t_0}$ and the two arcs of~$C$. Accordingly, we add five branches to~$t_0$, each decomposing one of the components, as follows.

For $i = 0, 1,2$, attach a path $t^i_1\ldots t^i_{2n-1}$ to~$t_0$ and put $V_{t_j^i} = V_{t_0} \cup \{ x^i_j, x^i_{j+1} \}$. Each of these is contained in a connected set of size~$5n +3$, as~$x_j^i$ is joined to~$Q$ by a path of length~$n$. To each~$t^i_j$ attach~$4n$ leaves, each consisting of some~$P_{j,k}^i$, $k \in [4n]$, which obviously does not exceed the prescribed size. To~$t^i_{2n-1}$ we add another~$4n$ leaves consisting of all the~$P_{j+1,k}^i$. To decompose~$C$, we attach two more paths~$s^1_0s^1_1$ and~$s^2_0s^2_1$ to~$t_0$, one for each arc~$S^j$ of $C - \{ a, b \}$. For $j= 1,2$, $V_{s^j_0}$ contains $\{ a, b\}$ and the~$3n$ vertices of~$S^j$ which lie closest to~$a$, while~$V_{s^j_1}$ contains~$b$ and its closest $5n+1$ vertices on~$S^j$. Both of these sets are contained in connected sets of size~$5n + 2$.
Figure~\ref{fig:baum} shows our decomposition tree.

\begin{figure}[h]
\begin{center}
\includegraphics{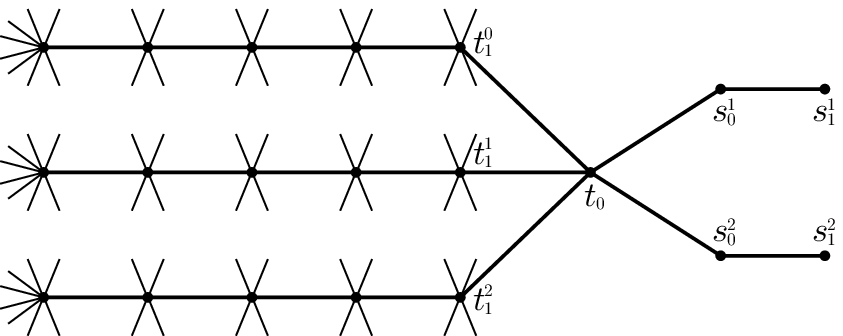}
\caption{The decomposition tree of~$G$.}\label{fig:baum}
\end{center}
\end{figure}

To show~$\ctw(G) \geq 6n$, let us assume for a contradiction that~$G$ had a connected \td\ $(T,\V)$ of width less than~$6n$.
We shall show that some part $V_t$ contains~$Q$ and some other part $V_{t'}$ contains~$P_0$.
To see that some part contains~$Q$, we define a bramble as follows: For For all $i, j, k$, let~$B^i_{j,k}$ be the union of all the paths from~$x^i_j$ to~$Q$ with all end vertices except for~$y_k$ deleted. It is easy so see that the collection~$\B_1$ of all these sets~$B^i_{j,k}$ is a bramble. Therefore, some part~$V_t$ of $(T, \V)$ must cover~$\B_1$. If some vertex~$y_k \in Q$ is not included in~$V_t$, then $V_t$ must contain at least one vertex from each of the~$6n$ pairwise disjoint sets $B^i_{j,k} \setminus \{ y_k \}$ with $i \in \{ 0, 1, 2 \}$ and $j \in [2n]$. Since no such selection of vertices is connected without the addition of further vertices, this contradicts our assumption that $|V_t| \leq 6n$.

We now show that some part contains~$P_0$. Let~$C'$ be the cycle of length $12n+2$ consisting of one of the $a$--$b$ paths on~$C$ together with~$Q$ and let~$\B_2$ be the bramble consisting of all segments of~$C'$ of length~$6n+1$. Again, there must be a part~$V_{t'}$ covering~$\B_2$. Assume for a contradiction that some vertex~$x^0_j \in P_0$ was not contained in~$V_{t'}$. Observe, crucially, that~$C'$ is geodesic in $G' := G - x^0_j$ and hence~$\B_2$ has connected order $6n+2$ in~$G'$ (see \cite[Lemma~7.1]{MalteRhd}). As~$V_{t'}$ is a cover of~$\B_2$ in~$G'$, it follows that~$V_{t'} \geq 6n+2$, which is a contradiction.

So we have found parts $V_t, V_{t'}$ containing~$Q$ and~$P_0$, respectively. Choose two such parts at minimum distance in~$T$. Note first that $t \neq t'$, because $P_0 \cup Q$ has size~$6n$ and we need at least one further vertex, for example~$a$, to connect these two paths. We now distinguish two cases. Suppose first that another node~$s$ of~$T$ lies between~$V_t$ and $V_{t'}$. By our choice of $t, t'$, there must be some $x^0_j, y_k \notin V_s$. But~$V_s$ separates~$V_t$ and~$V_{t'}$, so it must contain some vertex from~$P^0_{j,k}$. Being connected, $V_s$ is actually contained in this path. But then it cannot separate any other two vertices of~$P$ and~$Q$, which is a contradiction. Suppose now that~$t$ and~$t'$ are neighbours in~$T$. Pick any $x^0_p \in P_0 \setminus V_t$ and $y_q \in Q \setminus V_{t'}$. Since~$V_t \cap V_{t'}$ separates the two, it contains some vertex of~$P_{p,q}^0$, and thus at least one of~$V_t$, $V_{t'}$ contains at least half the vertices of~$P^0_{p,q}$. We may assume that this applies to~$Q$; the other case follows symmetrically. For every $1 \leq j \leq 2n$ consider $R_j :=\bigcup_{k = 1}^{4n} P^0_{j,k} \setminus Q$, the subdivision of a star with root~$x^0_j$. These are pairwise disjoint and disjoint from~$Q$, and since~$V_t$ contains at least $n/2 \geq 2$ vertices from~$R_p$, there is some~$m \in [2n]$ with $V_t \cap R_m = \emptyset$, by our assumption on the width. As $V_t \cap V_{t'}$ separates~$x_m^0$ from~$V_t$, we must have $Q \sub V_{t'}$, contradicting $t \neq t'$.

\end{section}

\begin{section}{Concluding Remarks and open problems} \label{remarks}
	Define the \emph{connected bramble number} $\cbn(G)$ of a graph~$G$ to be the maximum connected order of any bramble in~$G$. In Section~\ref{brambles} we observed that
	\begin{equation}	
		\cbn(G) - 1 \leq \wctw(G) \leq \ctw(G) \label{double ineq}
		\tag{$\dagger$}
	\end{equation}
holds for any graph~$G$. Diestel and M\"uller~\cite{MalteRhd} conjectured that $\cbn(G) - 1 = \ctw(G)$, but our example in Section~\ref{duality} shows that the second of the two inequalities in~(\ref{double ineq}) cannot be replaced by an equality. We do suspect, however, that the first inequality is in fact an equality:

\begin{question}
	Let~$k$ be a positive integer. A graph~$G$ has a \td\ in which every part is contained in a connected set of at most~$k$ vertices
	if and only if every bramble of~$G$ can be covered by a connected set of size at most~$k$.
\end{question}


It seems that neither the proof techniques of ordinary tree-width duality nor the ideas underlying our counterexample to connected tree-width duality are apt to solve this problem; hence we are confident that an inquiry into this problem is going to provide new ideas and insights.

The second problem concerns the second inequality of~(\ref{double ineq}). The proof of~\cite[Theorem~1.2]{MalteRhd} combined with the improved bound of Theorem~\ref{main result} shows that $\ctw(G) \leq 2(\cbn(G) -1)(\cbn(G) -2)$, unless~$G$ is a forest in which case~$\ctw(G) = \tw(G)$. This implies a locality principle for connected tree-width: if there is a \td\ in which every part, individually, can be wrapped in a connected set of size at most~$k$, then there is a \td\ with connected parts of size at most $2(k-1)(k-2)$. It would be interesting to get a better understanding of this dependency.

\begin{question}
	Is there a constant $\alpha > 0$ such that for every graph~$G$ 
	\[
	\ctw(G) \leq \alpha \, \wctw(G) ?
	\]
\end{question}

Our example in Section~\ref{duality} shows that this is not true for any $\alpha < 6/5$.

\end{section}

\bibliographystyle{plain}
\bibliography{collectivebib}

\end{document}